\newtheorem{defn}{Definition}[section]
\newtheorem{thm}[defn]{Theorem}
\newtheorem{lemma}[defn]{Lemma}
\newtheorem{coro}[defn]{Corollary}
\newtheorem{clm}[defn]{Claim}
\theoremstyle{definition}
\newtheorem*{rmrk}{Remark}
\newtheorem*{teo}{Theorem}
\title{On Galvin's lemma and Ramsey spaces}
\author{Jos\'e G. Mijares\thanks{jmijares@ivic.ve}\ \thanks{jose.mijares@ciens.ucv.ve}\\ Departamento de Matem\'aticas\\ Instituto Venezolano de Investigaciones Cient\'ificas y \\Escuela de Matem\'aticas\\Universidad Central de Venezuela}
\date{}
\begin{document}

\maketitle

\begin{abstract}
An abstract version of Galvin's lemma is proven, within the framework of the theory of Ramsey spaces. Some instances of it are explored.
\end{abstract}

\begin{flushleft}
\textbf{Keywords:} Galvin's lemma, Ramsey spaces. \textbf{MSC:} 05D10, 03E02
\end{flushleft}

\section{Introduction}

 For $A\subseteq\mathbb{N}$, let $A^{[\infty]} =
\{X\subset A: |X| = \infty\}$ and $A^{[<\infty]} =
\{X\subset A: X\ \mbox{is finite}\ \}$. Galvin's lemma can be stated as follows:

\begin{teo}[Galvin's lemma \cite{galvin}]
 Given ${\cal F}\subseteq\mathbb{N}^{[<\infty]}$, there exists $A\in \mathbb{N}^{[\infty]}$ such that one of the following holds:
\begin{enumerate}
\item $A^{[<\infty]}\cap{\cal F}=\emptyset$, or
\item $(\forall B\in A^{[\infty]})\ (\exists \ a\in {\cal F})\ (a\sqsubset B)$, i.e., $a$ is an \textit{initial segment} of $B$.
\end{enumerate}
\end{teo}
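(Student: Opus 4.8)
The plan is to prove the lemma by the Galvin--Prikry method of \emph{combinatorial forcing}, organized around a single relation between finite and infinite sets. For a finite $s\in\mathbb{N}^{[<\infty]}$ and an infinite $A\in\mathbb{N}^{[\infty]}$, I will say that $A$ \emph{accepts} $s$ if every $X\in A^{[\infty]}$ with $\min X>\max s$ (all $X$ when $s=\emptyset$) has an initial segment of $s\cup X$ lying in $\mathcal{F}$, and that $A$ \emph{rejects} $s$ if no $B\in A^{[\infty]}$ accepts $s$. Three observations are then immediate from the definitions: acceptance and rejection are inherited by infinite subsets; every pair $(s,A)$ is \emph{decided} in the sense that either $A$ rejects $s$ or some $B\in A^{[\infty]}$ accepts $s$; and if $A$ rejects $s$ then $s\notin\mathcal{F}$, since otherwise $s\sqsubset s\cup X$ would already witness acceptance for every $X$.

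The first step is a fusion: I would build $A=\{a_0<a_1<\cdots\}\in\mathbb{N}^{[\infty]}$ together with a $\subseteq$-decreasing sequence $A_0\supseteq A_1\supseteq\cdots$ with $a_k=\min A_k$ and $A_{k+1}\subseteq A_k\setminus\{a_k\}$, choosing $A_k$ so as to decide every $s\subseteq\{a_0,\dots,a_{k-1}\}$ (finitely many constraints handled by nested refinement, using that deciding passes to subsets). Keying each stage to $\max s$ and using $a_k=\min A_k$ guarantees that continuations of such an $s$ inside $A$ lie in the tail $\{a_k,a_{k+1},\dots\}\subseteq A_k$, so the deciding property is inherited by the limit set $A$; thus $A$ decides every finite $s\subseteq A$.

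The crux is the \emph{propagation of rejection}: if $A$ rejects $s$, then $A$ rejects $s\cup\{n\}$ for all but finitely many $n\in A$ with $n>\max s$. To see this, set $D=\{n\in A:n>\max s,\ A\text{ accepts }s\cup\{n\}\}$ and suppose $D$ is infinite. For any $C\in D^{[\infty]}$, writing $n=\min C$, inheritance gives that $D$ accepts $s\cup\{n\}$, so $(s\cup\{n\})\cup(C\setminus\{n\})=s\cup C$ has an initial segment in $\mathcal{F}$; as $C$ was arbitrary, $D$ accepts $s$, contradicting that $A$ rejects $s$. With propagation in hand I finish by deciding $\emptyset$ on the fused set $A$: if $A$ accepts $\emptyset$ then every $X\in A^{[\infty]}$ has an initial segment in $\mathcal{F}$, which is alternative (2); if $A$ rejects $\emptyset$, a second diagonalization repeatedly using propagation yields $B=\{b_0<b_1<\cdots\}\in A^{[\infty]}$ with $A$ rejecting every finite subset of $B$, so every such subset avoids $\mathcal{F}$ and $B^{[<\infty]}\cap\mathcal{F}=\emptyset$, which is alternative (1).

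I expect the propagation lemma to be the main obstacle, both because it is where the dichotomy is really generated and because it must be combined correctly with the fusion bookkeeping: one has to verify that deciding and rejecting are genuinely inherited by the diagonal limits, which rests on the downward inheritance noted at the outset and on choosing each new point to respect the finitely many constraints imposed by earlier ones. A much shorter route would be to note that $\{X\in\mathbb{N}^{[\infty]}:\exists a\in\mathcal{F},\ a\sqsubset X\}$ is open in the product topology and to quote the Galvin--Prikry theorem, but since that result is not among the statements available here, I would keep the self-contained forcing argument above.
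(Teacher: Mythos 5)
Your proof is correct, and its engine is the very argument the paper runs, only at a different level of generality: the paper never proves the classical lemma directly, but obtains it (Section 4.1) as the instance of the abstract Theorem \ref{abstractGalvin} at Ellentuck's space $(\mathbb{N}^{[\infty]},\subseteq,r)$, and the proof of that abstract theorem has exactly your skeleton --- the accepts/rejects/decides combinatorial forcing, a fusion producing a set that decides all of its own finite approximations (Claim \ref{decidesClaim}), the dichotomy on whether $\emptyset$ is accepted, and a second diagonalization through rejected approximations. The substantive differences are precisely where the abstract axioms enter: your propagation-of-rejection lemma (finiteness of $D=\{n\in A: n>\max s,\ A \mbox{ accepts } s\cup\{n\}\}$, proved by the $\min C$ trick) replaces the paper's appeal to the pigeonhole axiom (A.6) in part 5 of Claim \ref{forcingFacts}, and your explicit nested-tail bookkeeping, keyed to $\max s$, replaces the appeal to metric closedness of $\mathcal{R}$. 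What your route buys is a self-contained, elementary proof of the classical statement; what the paper's route buys is that the identical argument, once axiomatized, yields all the other instances of Section 4 (matrices, vector spaces, partitions) with no extra work. One wording issue you should fix: your propagation lemma asserts that $A$ \emph{rejects} $s\cup\{n\}$ for all but finitely many $n$, whereas the argument shows only that $A$ does not \emph{accept} those $s\cup\{n\}$; this is harmless because you apply propagation only to the fused set, which decides every finite subset of itself, so non-acceptance there is rejection --- but the lemma should either be stated for that set or phrased as non-acceptance.
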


 This important result plays a crucial role in the characterization of those subsets of $\mathbb{N}^{[\infty]}$ having the \textit{Ramsey property}. It deals with finite colorings of the set of natural \textit{approximations} to infinite sets of nonnegative integers (i.e., finite subsets of them) and  makes possible to show that some interesting subsets of $\mathbb{N}^{[\infty]}$ are Ramsey. This was the approach used by Galvin and Prikry (see \cite{galpri}) to show that metric Borel subsets of $\mathbb{N}^{[\infty]}$ are Ramsey. After Ellentuck gave (in \cite{ellen}) a topological characterization of the Ramsey property, several Ellentuck-like theorems which generalize this characterization to other contexts were proven (see for instance \cite{carlson}, \cite{carsimp},
\cite{mill} or \cite{todo}). Each of these theorems deals with a \textit{topological Ramsey space}, endowed with a convenient set of approximations to its elements and with a topology similar to the one defined by Ellentuck on $\mathbb{N}^{[\infty]}$ (in \cite{todo}, these results are condensed into the \textit{abstract Ellentuck theorem}, from which all of them can be derived). Nevertheless, given one such Ramsey space, the nature of the set of approximations related to it in a sense expressed by Ramsey's theorem \cite{ramsey} and Galvin's lemma, is explored using an \textit{indirect approach} in most of the cases. That is, given a topological Ramsey space, the statements about the regular behavior of the corresponding set of approximations are derived from those concerning the regular behavior of subsets of the space, using the corresponding Ellentuck-like theorem.

\medskip

 Following \cite{carsimp} and \cite{todo}, but avoiding to use the abstract Ellentuck theorem, in this work we show an abstract version of Galvin's lemma, within the framework of the theory of Ramsey spaces. Any instance of it is a true combinatorial statement concerning the regular behavior of the corresponding set of approximations in a given topological Ramsey space. Among the many instances, we present one which lead us to a simple proof of the Graham-Leeb-Rothschild theorem \cite{graleeroth}, which refers to finite colorings of finite dimensional vector spaces over a finite field, and of an infinitary version of it due to Carlson \cite{carlson} which can be seen as a vector version of the Galvin-Prikry theorem \cite{galpri}. In the same spirit, we present another instance leading to simple proofs of Ramsey's theorem for $n$-parameter sets due to Graham and Rothschlid \cite{graroth}, of the dualization of Ramsey's theorem due to Halbeisen \cite{Halb} and of the Dual Galvin-Prikry theorem due to Carlson and Simpson \cite{carsimp2}.

\begin{flushleft}
\textbf{Acknowledgement.} \\
The author would like to express his deepest gratitude to Stevo Todorcevic and Carlos Di Prisco. Also, the author thanks Labib S. Haddad for valuable comments that helped make the proof of our main result more readable, and thanks the referee for useful suggestions to make a better presentation of the results contained in this paper.
\end{flushleft}

\section{Topological Ramsey spaces}\label{topo}
 The definitions and results throughout this section are
taken from \cite{todo}. A previous presentation can also be found in \cite{carsimp}. Consider a triplet of the form $(\mathcal{R}, \leq,
r )$, where $\mathcal{R}$ is a set, $\leq$
is a quasi order on $\mathcal{R}$ and $r: \mathbb{N}\times\mathcal{R}\rightarrow \mathcal{AR}$
 is a function with range $\mathcal{AR}$. For every $n\in \mathbb{N}$ and every $A\in \mathcal{R}$, let us write $r_n(A) := r(n,A)$ and $\mathcal{AR}_n := \{r_n(A) : A\in\mathcal{R}\}$. We say that $r_{n}(A)$ is
 \textit{the} $n$th \textit{approximation of} $A$. In order to capture the combinatorial structure
 required to ensure the provability of an Ellentuck type theorem, some assumptions on
 $(\mathcal{R}, \leq, r)$ will be imposed. The first three of them are the following:

\begin{itemize}
\item[{(A.1)}]For any $A\in \mathcal{R}$, $r_{0}(A) = \emptyset$.
\item[{(A.2)}]For any $A,B\in \mathcal{R}$, if $A\neq B$ then
$(\exists n)\ (r_{n}(A)\neq r_{n}(B))$.
\item[{(A.3)}]If $r_{n}(A) =
r_{m}(B)$ then $n = m$ and $(\forall i<n)\ (r_{i}(A) = r_{i}(B))$.
\end{itemize}
These three assumptions allow us to identify each $A\in
\mathcal{R}$ with the sequence $(r_{n}(A))_{n}$ of its
approximations. In this way, if $\mathcal{AR}$ has the discrete
topology, $\mathcal{R}$ can be identified with a subspace of the
(metric) space $\mathcal{AR}^{\mathbb{N}}$ (with the product
topology) of all the sequences of elements of $\mathcal{AR}$. We will say that $\mathcal{R}$ is
{\it metrically closed} if it is a closed subspace of
$\mathcal{AR}^{\mathbb{N}}$. The basic open sets generating the metric topologogy on $\mathcal{R}$ inherited from the product topology of $\mathcal{AR}^{\mathbb{N}}$ are of the form:

$$[a] = \{B\in \mathcal{R} : (\exists n)(a =
r_{n}(B))\}$$where $a\in\mathcal{AR}$.

\medskip

 For $a\in \mathcal{AR}$, define the \textit{length} of $a$,
$|a|$, as \textit{the unique} $n$ \textit{such that} $a = r_{n}(A)$ \textit{for some} $A\in
\mathcal{R}$. The \textit{Ellentuck type
neighborhoods} are of the form:
$$[a,A] = \{B\in \mathcal{R} : (\exists n)(a =
r_{n}(B))\ \ \mbox{and} \ \ (B\leq A)\}$$ where $a\in\mathcal{AR}$
and $A\in \mathcal{R}$. Let $\mathcal{AR}(A) = \{a\in\mathcal{AR} : [a,A]\neq\emptyset\}$. Also, write $[n,A] := [r_{n}(A),A]$.

\medskip

 Also, given a neighborhood $[a,A]$ and $n\geq |a|$, let $r_n[a,A]$ be \textit{the image of} $[a,A]$ \textit{by the function} $r_n$, i.e., the set  $\{b\in\mathcal{AR} : \exists B\in [a,A]\ \mbox{such that}\ b = r_n(B)\}$.

\begin{defn}
A set $\mathcal{X}\subseteq \mathcal{R}$
is \textbf{Ramsey} if for every neighborhood $[a,A]\neq\emptyset$
there exists  $B\in [a,A]$ such that $[a,B]\subseteq \mathcal{X}$
or $[a,B]\cap \mathcal{X} = \emptyset$. A set
$\mathcal{X}\subseteq \mathcal{R}$ is \textbf{Ramsey null} if for
every neighborhood $[a,A]$ there exists  $B\in [a,A]$ such that
$[a,B]\cap \mathcal{X} = \emptyset$.
\end{defn}

\begin{defn}
We say that $(\mathcal{R}, \leq,
r)$ is a \textbf{topological Ramsey space} if subsets of
$\mathcal{R}$ with the Baire property are Ramsey and meager
subsets of $\mathcal{R}$ are Ramsey null.
\end{defn}

\medskip

(A.4)(\textit{Finitization}) There is a quasi order $\leq_{fin}$ on
$\mathcal{AR}$ such that:
    \begin{itemize}
    \item[{(i)}]$A\leq B$ iff
    $(\forall n)\ (\exists m) \ \ (r_{n}(A)\leq_{fin} r_{m}(B))$.
    \item[{(ii)}]$\{b\in \mathcal{AR} : b\leq_{fin} a\}$ is finite, for every
    $a\in \mathcal{AR}$.
    \end{itemize}

\vspace{.25 cm}

 Given $A\in\mathcal{R}$ and $a\in\mathcal{AR}(A)$,
we define the \textit{depth of} $a$ \textit{in}
$A$ as $$depth_{A}(a): = min\{n :a\leq_{fin}r_n(A)\}.$$

\begin{lemma}\label{LenthDepth}
Given $A\in\mathcal{R}$ and $a\in\mathcal{AR}(A)$, $|a|\leq depth_{A}(a)$.
\end{lemma}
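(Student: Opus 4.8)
The plan is to use the hypothesis $a\in\mathcal{AR}(A)$ to produce a witness and then run an induction on the approximations of that witness, showing that their depths in $A$ grow at least as fast as their indices. Since $a\in\mathcal{AR}(A)$, the neighborhood $[a,A]$ is nonempty, so I fix $B\in[a,A]$; by definition this means $B\leq A$ and $a=r_{|a|}(B)$. I then track the finite chain of approximations $\emptyset=r_0(B),r_1(B),\dots,r_{|a|}(B)=a$ together with the associated sequence of depths $d_i:=depth_{A}(r_i(B))$, each of which is well defined because $B\leq A$ forces every $r_i(B)$ to be $\leq_{fin}$ some $r_m(A)$ via (A.4)(i).

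The core of the argument is the claim that $d_0<d_1<\dots<d_{|a|}$. Granting it, and noting $d_0=depth_{A}(\emptyset)=0$ (using (A.1) and reflexivity of $\leq_{fin}$), a trivial induction over the natural numbers $d_i$ gives $d_i\geq i$ for every $i\leq|a|$; taking $i=|a|$ yields $depth_{A}(a)=d_{|a|}\geq|a|$, which is exactly the lemma. For the non-strict part $d_i\leq d_{i+1}$ I would first establish that an initial approximation is $\leq_{fin}$ a longer one, i.e. $r_i(B)\leq_{fin}r_{i+1}(B)$; combining this with $r_{i+1}(B)\leq_{fin}r_{d_{i+1}}(A)$ and transitivity of $\leq_{fin}$ gives $r_i(B)\leq_{fin}r_{d_{i+1}}(A)$, whence $d_i\leq d_{i+1}$ by the minimality built into the definition of depth.

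The main obstacle is the strictness $d_i<d_{i+1}$, which is the heart of the matter and is tied to the length-monotonicity principle ``$x\leq_{fin}y\Rightarrow|x|\leq|y|$''; it cannot follow from the tree axioms (A.1)--(A.3) alone, since those say nothing about $\leq_{fin}$. Concretely, I must rule out that the two distinct approximations $r_i(B)\neq r_{i+1}(B)$ are both $\leq_{fin}r_m(A)$ for one and the same minimal $m$; heuristically, extending $r_i(B)$ to $r_{i+1}(B)$ must consume a coordinate of $A$ beyond depth $d_i$, pushing the depth strictly up. Turning this heuristic into a proof is where the finiteness clause (A.4)(ii) and the biconditional (A.4)(i) have to be combined carefully, and I would sanity-check the mechanism on the Ellentuck space $\mathbb{N}^{[\infty]}$ — where $d_i$ is literally the position in $A$ of the $i$th element of $B$ and strictness is immediate — before abstracting it. I expect essentially all of the difficulty of the lemma to be concentrated in this single step.
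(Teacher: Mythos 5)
You never actually prove the strict inequality $d_i<d_{i+1}$: you describe the mechanism heuristically (``extending $r_i(B)$ must consume a coordinate of $A$'') and then explicitly defer it, while acknowledging that essentially all of the difficulty of the lemma sits in that single step. Since the rest of your outline is routine bookkeeping, the proposal as it stands is not a proof. The same problem afflicts your preliminary step $r_i(B)\leq_{fin}r_{i+1}(B)$: nothing in (A.4) gives it, because A.4(i) only produces, for each $n$, \emph{some} $m$ with $r_n(B)\leq_{fin}r_m(B)$, with no control over $m$. For comparison, the paper states this lemma with no proof at all: it is being read as an immediate consequence of a tacit property that holds in every concrete Ramsey space, namely that $\leq_{fin}$ is length-monotone, i.e.\ $x\leq_{fin}y$ implies $|x|\leq|y|$ (in Ellentuck's space $\leq_{fin}$ is $\subseteq$ and length is cardinality). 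Granting that, the whole proof is one line: writing $d=depth_{A}(a)$, we have $a\leq_{fin}r_d(A)$ by definition of depth, hence $|a|\leq|r_d(A)|=d$.

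The deeper issue is that the gap in your argument cannot be closed from (A.1)--(A.6) as stated, so no ``careful combination'' of A.4(i) and A.4(ii) will ever produce the strictness (or the lemma). Consider the degenerate structure $\mathcal{R}=\{A\}$ with the approximations $r_n(A)$ pairwise distinct, $r_0(A)=\emptyset$, the trivial quasi-order $A\leq A$, and $\leq_{fin}$ defined as equality together with the single extra relation $r_1(A)\leq_{fin}r_0(A)$. This $\leq_{fin}$ is a quasi-order; (A.1)--(A.3) hold; A.4(i) holds (take $m=n$); A.4(ii) holds since each set $\{b:b\leq_{fin}a\}$ has at most two elements; (A.5) and (A.6) hold trivially because $[n,A]=\{A\}$ and each $r_{|a|+1}[a,A]$ is a singleton; and $\mathcal{R}$, being a single point, is metrically closed. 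Yet $depth_A(r_1(A))=0<1=|r_1(A)|$, so the lemma itself fails in this structure. Consequently the lemma genuinely depends on the implicit length-monotonicity of $\leq_{fin}$ (or on a richer finitization axiom, as in Todorcevic's formulation), which is why the paper treats it as immediate; and once you grant that property, your entire scaffolding of the depth sequence $d_0<d_1<\dots<d_{|a|}$ is unnecessary.
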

\qed

\begin{itemize}

\item[{(A.5)}](\textit{Amalgamation}) Given $a$ and $A$
with $depth_{A}(a)=n$, the following holds:
\begin{itemize}
\item[{(i)}] $(\forall B\in [n,A])\ \ ([a,B]\neq\emptyset)$.
\item[{(ii)}] $(\forall B\in [a,A])\ \ (\exists A'\in [n,A])\ \
([a,A']\subseteq [a,B])$.
\end{itemize}
\item[{(A.6)}](\textit{Pigeon Hole Principle}) Given $a$
and $A$ with $depth_{A}(a) = n$, for every $\mathcal{O}\subseteq\mathcal{AR}_{|a|+1}$ there is $B\in
[n,A]$ such that $r_{|a|+1}[a,B]\subseteq\mathcal{O}$ or $r_{|a|+1}[a,B]\subseteq\mathcal{O}^c$.
\end{itemize}

\vspace{0.4cm}

{\bf Abstract Ellentuck theorem:}

\begin{thm}[Carlson]\label{AbsEll}
Any $(\mathcal{R}, \leq, r)$
with $\mathcal{R}$ metrically closed and satisfying (A.1)-(A.6) is a
Ramsey space.
\end{thm}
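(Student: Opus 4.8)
The plan is to reproduce Ellentuck's original argument in the abstract setting, replacing the Ramsey-theoretic pigeonhole for $\mathbb{N}^{[\infty]}$ by the abstract pigeonhole (A.6), and replacing the diagonalization of infinite sets by a fusion argument whose limit is controlled by (A.4)(ii), (A.5) and, crucially, by metric closedness. Throughout I read ``Baire property'' and ``meager'' with respect to the exponential (Ellentuck-type) topology on $\mathcal{R}$ generated by the neighborhoods $[a,A]$, since this is the topology whose regularity the assumptions are designed to capture; under this reading, proving that Baire-property sets are Ramsey and meager sets are Ramsey null is exactly the assertion that $(\mathcal{R},\leq,r)$ is a topological Ramsey space.

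First I would set up the combinatorial forcing relative to a fixed $\mathcal{X}\subseteq\mathcal{R}$. For $a\in\mathcal{AR}$ and $A\in\mathcal{R}$ with $[a,A]\neq\emptyset$, say that $A$ \emph{accepts} $a$ if $[a,A]\subseteq\mathcal{X}$, that $A$ \emph{rejects} $a$ if no $B\in[depth_{A}(a),A]$ accepts $a$, and that $A$ \emph{decides} $a$ if it does one or the other. Using amalgamation (A.5)(ii) one checks that these notions are coherent under refinement, so that a decision made by $A$ is inherited by the relevant lower bounds of $A$; here Lemma~\ref{LenthDepth} keeps the length and depth bookkeeping consistent, and (A.5)(i) guarantees that the neighborhoods involved remain nonempty. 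The single essential use of (A.6) is then the following one-step step: writing $n=depth_{A}(a)$ and applying the pigeonhole principle to the coloring of $\mathcal{AR}_{|a|+1}$ that marks which one-step extensions of $a$ are accepted, one obtains $B\in[n,A]$ on which acceptance of one-step extensions is uniform, i.e. either every such extension is accepted or none is.

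The crux is to upgrade these local, one-step decisions into a single global one. I would iterate the pigeonhole over all approximations, organized as a fusion sequence $A=A_{0}\geq A_{1}\geq\cdots$ in which $r_{n}(A_{n})=r_{n}(A_{n+1})$ and $A_{n+1}$ decides all one-step extensions of the finitely many approximations of length at most $n$ lying below $r_{n}(A_{n})$; assumption (A.4)(ii) is what makes each stage a \emph{finite} iteration of (A.6). By (A.1)--(A.3) the stabilized approximations $(r_{n}(A_{n}))_{n}$ determine a candidate limit, and metric closedness of $\mathcal{R}$ is precisely what forces this limit to be a genuine element $B\in\mathcal{R}$ with $B\leq A_{n}$ for all $n$; (A.5)(ii) lets one keep $B$ inside $[a,A]$. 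This $B$ decides every relevant approximation, yielding the basic dichotomy: for every $\mathcal{X}$ and every nonempty $[a,A]$ there is $B\in[a,A]$ with $[a,B]\subseteq\mathcal{X}$, or else $[a,B]$ meets $\mathcal{X}$ only through its closure. I expect this fusion-plus-pigeonhole passage to be the main obstacle, since it must simultaneously keep the approximations coherent (so the diagonal limit lands in $\mathcal{R}$) and make infinitely many local decisions converge to a hereditary one.

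Finally I would bootstrap from the core dichotomy to the full statement. When $\mathcal{X}$ is open in the exponential topology, acceptance is witnessed at a finite stage, so the dichotomy becomes the clean alternative $[a,B]\subseteq\mathcal{X}$ or $[a,B]\cap\mathcal{X}=\emptyset$, showing open sets are Ramsey. For meager sets I would first show that a nowhere dense set is Ramsey null, by running the rejection side of the decision lemma to drive some $[a,B]$ off the set; then, for a countable union $\mathcal{M}=\bigcup_{n}\mathcal{M}_{n}$ of nowhere dense sets, a second fusion (again with a limit supplied by metric closedness) produces $B$ with $[a,B]$ avoiding every $\mathcal{M}_{n}$, so meager sets are Ramsey null. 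A set $\mathcal{X}$ with the Baire property can be written as $\mathcal{O}\,\triangle\,\mathcal{M}$ with $\mathcal{O}$ open and $\mathcal{M}$ meager (symmetric difference); applying the open case to $\mathcal{O}$, the meager case to $\mathcal{M}$, and noting that altering a Ramsey set by a Ramsey-null set preserves the Ramsey property, one concludes that $\mathcal{X}$ is Ramsey. This establishes that $(\mathcal{R},\leq,r)$ is a topological Ramsey space.
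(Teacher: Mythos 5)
You should know at the outset that the paper contains no proof of Theorem \ref{AbsEll} to compare against: it is quoted as background from \cite{todo} (see also \cite{carsimp}), and the declared purpose of the paper is to prove Theorem \ref{abstractGalvin} \emph{without} appealing to it. So your attempt can only be measured against the standard proof in those references and against the machinery the paper builds for its own main result. Measured that way, your route is the right one, and it is essentially the same technique the paper deploys for Theorem \ref{abstractGalvin}: the accepts/rejects/decides combinatorial forcing, (A.6) to uniformize one-step extensions (compare part 5 of Claim \ref{forcingFacts}), fusion sequences whose limits exist by metric closedness (compare Claim \ref{decidesClaim}), and (A.4)(ii) to keep each stage finite. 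The substantive difference is what the forcing is relative to: the paper forces with a family $\mathcal{F}\subseteq\mathcal{AR}$ of approximations, which yields Theorem \ref{abstractGalvin} and from it only the \emph{metric} Borel case (Corollary \ref{abiertos}), whereas you force with a set $\mathcal{X}\subseteq\mathcal{R}$, acceptance meaning $[a,A]\subseteq\mathcal{X}$, and you read Baire property and meagerness in the Ellentuck topology. That reading is the correct interpretation of the statement, and your chain (open sets Ramsey, then nowhere dense sets Ramsey null, then meager, then Baire property) is how the theorem is actually proved in the literature.

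Three places in your sketch need repair before it is a proof. First, your ``core dichotomy'' is vacuous as stated: since $\mathcal{X}\subseteq\overline{\mathcal{X}}$, every set ``meets $\mathcal{X}$ only through its closure''. What the rejection side actually yields is that $[a,B]$ contains no Ellentuck-interior point of $\mathcal{X}$; and even this does not follow merely from ``$B$ decides every approximation''. After the deciding fusion you need a second fusion --- the analogue of the $C_n$, $C_n^k$ construction in the paper's proof of Theorem \ref{abstractGalvin} --- which interleaves the (A.6) uniformization level by level to convert ``$B$ rejects $a$'' into ``some $C\in[a,B]$ rejects every $b\in\mathcal{AR}(C)$ extending $a$''; only then does amalgamation (A.5)(ii) turn a hypothetical interior point of $\mathcal{X}$ inside $[a,C]$ into an accepted extension of $a$, a contradiction. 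For open $\mathcal{X}$ this gives the clean alternative $[a,C]\cap\mathcal{X}=\emptyset$, which is all you use, but it must be routed this way. Second, for a nowhere dense set $\mathcal{N}$, ``running the rejection side'' on $\mathcal{N}$ itself proves nothing, because $\mathcal{N}$ has empty interior and the rejection conclusion is then trivial; the correct step applies the open case to the open set $\mathcal{R}\setminus\overline{\mathcal{N}}$ and rules out the alternative $[a,B]\subseteq\overline{\mathcal{N}}$ on the grounds that a nonempty basic open set cannot lie inside a set with empty interior. Third, in the meager fusion, (A.5)(ii) returns refinements in $[depth_{A_n}(b),A_n]$ rather than in $[n,A_n]$, so you must schedule each approximation $b$ at the stage $n=depth_{A_n}(b)$ (finitely many per stage by (A.4)(ii), and every approximation of the fusion limit is eventually caught, by Lemma \ref{LenthDepth}). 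These are standard repairs, and with them your outline becomes the proof given in \cite{todo}; none of them, however, is supplied by the text of the proposal itself.
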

\qed

\section{Abstract versions}\label{AbstactVersions}

 The following is the main result of this paper. As announced in the introduction, we are going to avoid the \textit{indirect approach} in the proof; that is, we will not make use of the abstract Ellentuck theorem.

\begin{thm}[Abstract version of Galvin's lemma.]\label{abstractGalvin}
Given $(\mathcal{R}, \leq, r)$ with $\mathcal{R}$ metrically closed and satisfying (A.1)-(A.6), ${\cal F}\subseteq{\cal AR}$,  and $A\in{\cal R}$, there exists $B\leq A$ such that one of the following holds:
\begin{enumerate}
\item ${\cal AR}(B)\cap{\cal F}=\emptyset$, or
\item $(\forall C\leq B)$ $(\exists \ n\in \mathbb{N})$ $(r_n(C)\in{\cal F})$.
\end{enumerate}
\end{thm}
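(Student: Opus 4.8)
The plan is to run a combinatorial forcing argument of the Galvin--Prikry type, adapted to the abstract setting, relying only on the pigeon hole principle (A.6), amalgamation (A.5) and a fusion made possible by metric closure. For $a\in\mathcal{AR}$ and $C\in\mathcal{R}$ with $[a,C]\neq\emptyset$, I would say that $C$ \emph{accepts} $a$ if every $D\in[a,C]$ has some approximation in $\mathcal{F}$, i.e.\ $(\exists n)(r_n(D)\in\mathcal{F})$; that $C$ \emph{rejects} $a$ if no $C'\in[depth_C(a),C]$ accepts $a$; and that $C$ \emph{decides} $a$ if it accepts or rejects $a$. Two observations make this the right definition. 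First, ``$C$ accepts $\emptyset$'' is exactly clause (2) for $C$, because $r_0(D)=\emptyset$ for every $D$ by (A.1), so $[\emptyset,C]=\{D:D\leq C\}$. Second, if $C$ rejects \emph{every} $a\in\mathcal{AR}(C)$, then clause (1) holds: were some $a\in\mathcal{AR}(C)\cap\mathcal{F}$, then every $D\in[a,C]$ would have $r_{|a|}(D)=a\in\mathcal{F}$, so $C$ would accept $a$, contradicting rejection. Thus the whole problem reduces to deciding $\emptyset$ and, in the rejecting case, propagating rejection to all of $\mathcal{AR}(C)$.

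First I would record the elementary monotonicity facts: acceptance of $a$ passes to every refinement $C'\leq C$ with $a\in\mathcal{AR}(C')$, while rejection of $a$ passes to every $C'\in[depth_C(a),C]$ (the neighbourhoods over which acceptance is quantified only shrink). From these the \emph{decision lemma} is immediate: for every $a$ and every $A$ with $a\in\mathcal{AR}(A)$ there is $B\in[depth_A(a),A]$ deciding $a$ (take $B=A$ if $A$ rejects $a$; otherwise some $B\in[depth_A(a),A]$ accepts $a$ by definition). A first fusion, organised by the depth of approximations and kept finite at each stage by the fact that $\{b:b\leq_{fin}r_n(A)\}$ is finite (A.4)(ii), then yields $A^{*}\leq A$ deciding every $a\in\mathcal{AR}(A^{*})$; the limit $A^{*}$ exists because $\mathcal{R}$ is metrically closed, and decisions made at finite stages survive by the monotonicity facts.

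The heart of the argument, which I expect to be the main obstacle, is the following \emph{propagation lemma}: if $A$ decides all approximations and $A$ rejects $a$ with $depth_A(a)=n$, then there is $B\in[n,A]$ rejecting every $b\in r_{|a|+1}[a,B]$. To prove it I would colour $\mathcal{AR}_{|a|+1}$ by whether $A$ rejects a given one-step extension of $a$ and apply (A.6) to obtain $B\in[n,A]$ on which $r_{|a|+1}[a,B]$ is monochromatic. If the colour is ``reject'' we are done. The colour ``accept'' must be excluded: if every $b\in r_{|a|+1}[a,B]$ were accepted, then for each $D\in[a,B]$ its one-step approximation $b=r_{|a|+1}(D)$ would be accepted and $D\in[b,B]\subseteq[b,A]$, so $D$ would have an approximation in $\mathcal{F}$; hence $B$ would accept $a$, contradicting that $A$ rejects $a$ (as $B\in[n,A]$). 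The delicate point is getting the neighbourhoods and depths to line up so that (A.6) applies verbatim and amalgamation (A.5) keeps $[a,B]\neq\emptyset$ throughout.

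Finally I would combine these. Since $A^{*}$ decides $\emptyset$, either $A^{*}$ accepts $\emptyset$, in which case $B=A^{*}\leq A$ witnesses clause (2), or $A^{*}$ rejects $\emptyset$. In the latter case a second fusion, again indexed by depth (finitely many approximations at each depth by (A.4)(ii)) and with limit guaranteed by metric closure, repeatedly applies the propagation lemma so that, level by level, all one-step extensions of already-rejected approximations become rejected; because $A^{*}$ already decides everything, the hypothesis of the propagation lemma stays satisfied along the way. The indexing must be arranged so that each rejection, once achieved, is protected by the fixed initial part of the subsequent terms, which is exactly where the monotonicity of rejection within $[depth_C(a),C]$ is used. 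The limit $B\leq A$ then rejects every $a\in\mathcal{AR}(B)$, so by the second observation of the first paragraph this $B$ satisfies clause (1).
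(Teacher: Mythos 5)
Your proposal is correct and follows essentially the same route as the paper's proof: the same combinatorial forcing (accept/reject/decide with rejection quantified over $[depth_A(a),A]$), the same fusion via (A.4)(ii) and metric closure to obtain a space deciding all of its approximations, the same (A.6)-based pigeonhole step to propagate rejection to one-step extensions (your ``propagation lemma'' is the paper's Claim~\ref{forcingFacts}(5), stated with rejection rather than non-acceptance, which is equivalent once everything is decided), and the same second fusion producing a $B$ that rejects every $a\in\mathcal{AR}(B)$, hence satisfies clause (1). The only cosmetic difference is that the paper organizes the second fusion explicitly as a double sequence $C_n^k$ indexed by length and depth, which is the concrete realization of the indexing you describe.
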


\begin{proof}
Fix ${\cal
F}\subseteq {\cal AR}$. Given $A\in {\cal R}$ and $a\in {\cal AR}$, we say that $A$
\emph{\textbf{accepts}} $a$ if for every $B\in [a,A]$ there exists $n\in \mathbb{N}$
such that $r_n(B)\in{\cal F}$. We say that $A$
\emph{\textbf{rejects}} $a$ if $[a,A]\neq\emptyset$ and no element
of $[depth_A(a),A]$ accepts $a$; and we say that $A$
\emph{\textbf{decides}} $a$ if $A$ either accepts or rejects $a$.
This \textit{combinatorial forcing} has the following properties:

\begin{clm}\label{forcingFacts}
\begin{enumerate}
\item If $A$ accepts $a$, then every $B\leq A$ accepts $a$.
\item If $A$ rejects $a$, then every $B\leq A$ rejects $a$, if $[a,B]\neq\emptyset$.
\item For every $A\in {\cal R}$ and every $a\in {\cal AR}(A)$ there exists $B\in [depth_A(a),A]$ which decides $a$.
\item If $A$ accepts $a$ then $A$ accepts every $b\in r_{|a|+1}[a,A]$.
\item If $A$ rejects $a$ then there exists $B\in [depth_A(a),A]$ such that $A$ does not accept any $b\in r_{|a|+1}[a,B]$.
\end{enumerate}
\end{clm}
\begin{proof}Parts 1, 2, 3 and 4 follow from the definitions. To prove 5, let $\mathcal{O} = \{b\in\mathcal{AR}_{|a|+1} : A\ \mbox{accepts}\ b\}$. By A6, there exists $B\in [depth_A(a),A]$ such that $r_{|a|+1}[a,B]\subseteq\mathcal{O}$ or $r_{|a|+1}[a,B]\subseteq\mathcal{O}^c$. The first alternative is not possible since $A$ rejects $a$. Then the second alernative holds and hence $B$ is as required.
\end{proof}

\begin{clm}\label{decidesClaim}
 Given $A\in{\cal R}$, there exists $B\leq A$ which decides every $b\in {\cal AR}(B)$.
\end{clm}
\begin{proof}
Notice that for every $B\in\mathcal{R}$ and every $k\in\mathbb{N}$ the set $\{b\in\mathcal{AR}(B) : depth_{B}(b)=k\}$ is finite, by A4. Using this fact and part 3 of Claim \ref{forcingFacts} iteratively, we can build a sequence $(B_n)_{n\in\mathbb{N}}\subseteq\mathcal{R}$ such that:

\begin{enumerate}
\item $B_0=A$.
\item $(\forall n>0)\ (B_n\in [n-1,B_{n-1}])$
\item $(\forall n>0)$\ ($B_n$ decides every $b\in\mathcal{AR}(B_n)$ with $depth_{B_n}(b)=n-1$).
\end{enumerate}

 Notice that $\bigcap_n [n,B_n]\neq\emptyset$, since $\mathcal{R}$ is metrically closed. If we take $B\in\bigcap_n [n,B_n]$ then $B$ is as required.
\end{proof}

 Continuing with the proof of Theorem \ref{abstractGalvin}, given $A\in\mathcal{R}$, fix $\hat{B}\leq A$ which decides every $b\in {\cal AR}(\hat{B})$. If $\hat{B}$ accepts $\emptyset$ then part 2 of Theorem \ref{abstractGalvin} holds. Otherwise,  we build a sequence $(C_n)_{n\in\mathbb{N}}\subseteq\mathcal{R}$ such that:

\begin{enumerate}
\item $C_0=\hat{B}$.
\item $(\forall n>0)\ (C_n\in [n-1,C_{n-1}])$
\item $(\forall n)$\ ($C_n$ rejects every $b\in\mathcal{AR}(C_n)$ with $|b|\leq n$).
\end{enumerate}

 So let $C_0=\hat{B}$. Then, $C_1$ is obtained applying part 5 of Claim \ref{forcingFacts}, since $C_0$ rejects $\emptyset$ and decides any other $b\in \mathcal{AR}(C_0)$.

\medskip

 Suppose we have define $C_n$ rejecting every $b\in\mathcal{AR}(C_n)$ with $|b|\leq n$. Again, applying part 5 of Claim \ref{forcingFacts} iteratively (and also applying Lemma \ref{LenthDepth}), for every $k\geq 0$ define $C_n^{k}$ such that:

\begin{enumerate}
\item[{(a)}] $C_n^0 \in [n,C_n]$.
\item[{(b)}] $(\forall k>0)\ C_n^k \in [n+k,C_n^{k-1}]$.
\item[{(c)}] $C_n^k$ rejects every $b \in \mathcal{AR}(C_n^k)$ with $|b| = n+1$ and $depth_{C_n^k}(r_n(b)) = n+k$.
\end{enumerate}

 Here, $r_n(b)$ is that unique $a$ such that $|a| = n$ and
$a$ is an initial segment of $b$. That is, if $b = r_{n+1}(A)$ for some $A$ then $a = r_n(A)$. It
is unique because of axioms A1-A3.

\medskip

 Take $C_{n+1}\in\bigcap_k [n+k,C_n^k]$. Then $C_{n+1}\in [n,C_n]$ and $C_{n+1}$ rejects every $b\in\mathcal{AR}(C_{n+1})$ with $|b|\leq n+1$.

\medskip

 This completes the definition of the $C_n$'s..

\medskip

 Now, take $B\in\bigcap_n [n,C_n]$. If $b \in \mathcal{AR}(B)$, by the choice of
$B$, there exists $n \geq |b|$ such that $[b,C_n]\neq\emptyset$ (that
is, $b \in \mathcal{AR}(C_n)$). Then $C_n$ rejects b, by Condition 3, and therefore so does
$B$. Hence Part 1 of Theorem \ref{abstractGalvin} holds with $B$ as witness. This completes the proof.

\end{proof}

 Using Theorem \ref{abstractGalvin}, we give a simpler proof of Theorem 1.7 of \cite{mij}, which is an abstract version of Ramsey's theorem.

\begin{rmrk}
Throughout the rest of this paper we will identify any element of $\mathbb{N}$ with the set of its predecessors.
\end{rmrk}

\begin{thm}[Abstract version of Ramsey's theorem]\label{AbsRam}
Given $(\mathcal{R}, \leq, r)$ with $\mathcal{R}$ metrically closed and satisfying (A.1)-(A.6), the following holds. Let $k, s\in\mathbb{N}$ and $A\in\mathcal{R}$ be given. Then, for every coloring $c : \mathcal{AR}_k \rightarrow s$, there exists $B\leq A$ such that $c$ is constant in $\mathcal{AR}_k(B)$.
\end{thm}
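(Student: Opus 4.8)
The plan is to argue by induction on the number of colors $s$, using Theorem \ref{abstractGalvin} to remove one color at each stage. The base cases are immediate: if $s=1$ then $c$ takes only the value $0$ and is already constant, so $B=A$ works; and if $s=0$ then $\mathcal{AR}_k=\emptyset$ and the conclusion is vacuous.

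For the inductive step, assume the statement holds for colorings into $s$ colors, and let $c:\mathcal{AR}_k\rightarrow s+1$ and $A\in\mathcal{R}$ be given. I would apply Theorem \ref{abstractGalvin} to the family $\mathcal{F}=\{b\in\mathcal{AR}_k : c(b)=0\}\subseteq\mathcal{AR}$, obtaining $B_0\leq A$ for which alternative (1) or alternative (2) of that theorem holds.

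The decisive point, and the place where the restriction $\mathcal{F}\subseteq\mathcal{AR}_k$ is used, is the analysis of alternative (2). Since every element of $\mathcal{F}$ has length $k$, the relation $r_n(C)\in\mathcal{F}$ forces $n=k$; hence alternative (2) asserts that $r_k(C)\in\mathcal{F}$ for every $C\leq B_0$. Now given any $b\in\mathcal{AR}_k(B_0)$ we have $[b,B_0]\neq\emptyset$, so we may pick $C\in[b,B_0]$, and by (A.1)--(A.3) the unique $n$ with $b=r_n(C)$ is $n=k$; thus $b=r_k(C)\in\mathcal{F}$, i.e.\ $c(b)=0$. Therefore alternative (2) already gives that $c$ is constantly $0$ on $\mathcal{AR}_k(B_0)$, and we finish with $B=B_0$. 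If instead alternative (1) holds, then $\mathcal{AR}(B_0)\cap\mathcal{F}=\emptyset$, so $c$ omits the color $0$ on $\mathcal{AR}_k(B_0)$ and takes there only the $s$ values $\{1,\dots,s\}$; relabeling these as an $s$-coloring and applying the inductive hypothesis starting from $B_0$ yields $B\leq B_0\leq A$ on which $c$ is constant. Here I use the monotonicity $\mathcal{AR}_k(B)\subseteq\mathcal{AR}_k(B_0)$, which follows at once from transitivity of $\leq$, since any $C\in[b,B]$ also lies in $[b,B_0]$.

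I expect the main obstacle to be precisely this translation between the Galvin dichotomy, which speaks of initial segments $r_n(C)$ of unspecified length, and the desired Ramsey conclusion, which concerns the single level $\mathcal{AR}_k$. Choosing $\mathcal{F}$ inside $\mathcal{AR}_k$ is exactly what collapses the existential quantifier over $n$ in alternative (2) to $n=k$, and thereby converts the dichotomy into a statement about $\mathcal{AR}_k(B_0)$; once this is in place, the remainder is routine bookkeeping in the induction on the number of colors.
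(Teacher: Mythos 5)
Your proof is correct and takes essentially the same route as the paper: the paper's proof reduces to the two-color case ``without loss of generality'' (which is exactly your induction on the number of colors) and then applies Theorem \ref{abstractGalvin} to $\mathcal{F}=c^{-1}(\{0\})\subseteq\mathcal{AR}_k$, just as you do. The only difference is that you make explicit the step the paper leaves implicit, namely that $\mathcal{F}\subseteq\mathcal{AR}_k$ forces $n=k$ in alternative (2), so that the Galvin dichotomy really does yield constancy of $c$ on $\mathcal{AR}_k(B_0)$.
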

\begin{proof}
Fix $k, s\in\mathbb{N}$ and $A\in\mathcal{R}$. Without a loss of generality, we can assume $s=2$. Then the result follows from Theorem \ref{abstractGalvin} applied to $\mathcal{F} = c^{-1}(\{0\})$ and $A$.
\end{proof}

 \textbf{Notation:} For $k,m\in\mathbb{N}$, $A\in\mathcal{R}$ and $b\in\mathcal{AR}(A)$, let us define $\mathcal{AR}_k^m(A) : = \{a\in\mathcal{AR}_k(A) : depth_A(a)=m\}$, $\mathcal{AR}_k^m(A,b) : = \{a\in\mathcal{AR}_k^m(A) : a\leq_{fin}b\}$.

\medskip

 With this notation, we state and prove the following abstract version of finite Ramsey's theorem. In \cite{carsimp}, a similar result is presented but the proof given in \cite{carsimp} uses the abstract Ellentuck theorem.

\begin{thm}[Abstract version of finite Ramsey's theorem]\label{AbsFiniteRamsey}
Given $(\mathcal{R}, \leq, r)$ with $\mathcal{R}$ metrically closed and satisfying (A.1)-(A.6), the following holds. Let $k, n, s\in\mathbb{N}$ and $A\in\mathcal{R}$ be given. Then, there exists $m\in\mathbb{N}$ such that for every coloring $c : \mathcal{AR}_k^m(A)\rightarrow s$, there exists $b\in\mathcal{AR}_n^m(A)$ such that $c$ is constant in $\mathcal{AR}_k^m(A,b)$.
\end{thm}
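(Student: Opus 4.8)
The goal is a finite, "local" Ramsey statement: I need a single depth $m$ that works uniformly for all $2$-colorings (again reducing $s$ to $s=2$ as in the proof of Theorem \ref{AbsRam}) of the finite set $\mathcal{AR}_k^m(A)$. The natural strategy is a \emph{compactness argument}: derive the finite statement from the already-proven infinite statement (Theorem \ref{AbsRam} or directly Theorem \ref{abstractGalvin}) by contradiction, using metric closedness of $\mathcal{R}$ together with the finiteness supplied by (A.4). Assume the conclusion fails: for every $m$ there is a coloring $c_m : \mathcal{AR}_k^m(A)\to 2$ admitting no $b\in\mathcal{AR}_n^m(A)$ homogeneous on $\mathcal{AR}_k^m(A,b)$. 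I want to assemble the $c_m$ into a single coloring of all of $\mathcal{AR}_k(A)$ and then invoke the infinite theorem to produce $B\le A$ on which the color is constant, extract from $B$ an approximation $b$ of length $n$ at the appropriate depth, and reach a contradiction with the assumed failure at that depth.

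First I would set up the reduction to $s=2$ exactly as in Theorem \ref{AbsRam}. Next, the key technical point: for each $m$, the set $\mathcal{AR}_k^m(A)=\{a\in\mathcal{AR}_k(A): depth_A(a)=m\}$ is \emph{finite}, because by (A.4)(ii) every such $a$ satisfies $a\le_{fin} r_m(A)$ and there are only finitely many such $a$. This is what makes the compactness work. Define a global coloring $c:\mathcal{AR}_k(A)\to 2$ by letting $c$ agree with $c_m$ on each $\mathcal{AR}_k^m(A)$ (these pieces partition $\mathcal{AR}_k(A)$ since every $a\in\mathcal{AR}_k(A)$ has a well-defined depth). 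Apply Theorem \ref{AbsRam} to $c$ and $A$ to get $B\le A$ with $c$ constant, say with value $\varepsilon$, on $\mathcal{AR}_k(B)$. Now take $b:=r_n(B)\in\mathcal{AR}_n(B)$ and let $m:=depth_B(b)$; I would then argue that $\mathcal{AR}_k^{m}(A,b)\subseteq \mathcal{AR}_k(B)$ so that $c_m=c$ is constant $=\varepsilon$ there, contradicting the choice of $c_m$.

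The main obstacle is the bookkeeping around depth: the global coloring is defined using $depth_A$, but Theorem \ref{AbsRam} hands back homogeneity on $\mathcal{AR}_k(B)$ for $B\le A$, and I must match the depth level $m$ at which $c$ was read off against the depth of the witness $b$ \emph{in $A$} rather than in $B$. Concretely I expect to use (A.5) and the monotonicity of depth under $\le$, together with Lemma \ref{LenthDepth}, to verify that for a homogeneous $B$ and $b=r_n(B)$, every $a\in\mathcal{AR}_k(A,b)$ with the correct depth in $A$ actually lies in $\mathcal{AR}_k(B)$; the cleanest route may be to define the global coloring so that the color of $a$ depends only on data visible below $b$, and then to choose $b$ as a sufficiently deep approximation of $B$ so that the relevant finite set $\mathcal{AR}_k^m(A,b)$ is captured inside $[r_n(B),B]$. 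Handling this alignment carefully—ensuring the finite witness produced by the infinite theorem genuinely falsifies $c_m$ at exactly the depth $m$ where $c$ was defined by $c_m$—is the delicate step; the rest is the finiteness bound from (A.4) and a routine application of the infinite Ramsey theorem already established.
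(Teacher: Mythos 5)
Your overall strategy is exactly the paper's: assume the conclusion fails for every $m$, fix counterexample colorings $c_m$, glue them into a single coloring $c$ of $\mathcal{AR}_k$ via $c(a)=c_{depth_A(a)}(a)$ on $\mathcal{AR}_k(A)$ (and $c(a)=0$ elsewhere), apply Theorem \ref{AbsRam} to get a homogeneous $B\leq A$, and extract a finite witness contradicting some $c_m$. However, your concrete extraction step is wrong as written, and the error sits precisely at the step you then defer: you set $b:=r_n(B)$ and $m:=depth_B(b)$. With that choice of $m$ the contradiction evaporates. The assumed badness of $c_m$ quantifies only over witnesses in $\mathcal{AR}_n^m(A)$, i.e., over $b'$ with $depth_A(b')=m$; in general $depth_B(b)\neq depth_A(b)$, so your $b$ need not lie in $\mathcal{AR}_n^m(A)$ at all, and constancy of $c_m$ on $\mathcal{AR}_k^m(A,b)$ then contradicts nothing. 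You do notice that the depth must be computed ``in $A$ rather than in $B$,'' but you leave the alignment as an open ``delicate step'' and suggest repairs (redefining the coloring so colors depend on data visible below $b$, choosing $b$ ``sufficiently deep'') that are both vague and unnecessary. So the proposal is incomplete at its only nontrivial point.

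The repair is short and is what the paper does: take any $b\in\mathcal{AR}_n(B)$ and set $\hat m:=depth_A(b)$. Then $b\in\mathcal{AR}_n^{\hat m}(A)$ holds by definition, and $\mathcal{AR}_k^{\hat m}(A,b)\subseteq\mathcal{AR}_k(B)$ holds for the following reason: if $a\leq_{fin}b$, then since $b\leq_{fin}r_{j}(B)$ with $j=depth_B(b)$, transitivity of $\leq_{fin}$ gives $a\leq_{fin}r_j(B)$, so $depth_B(a)$ is defined, and A5(i) (applied with $B$ itself as an element of $[depth_B(a),B]$) yields $[a,B]\neq\emptyset$, i.e., $a\in\mathcal{AR}(B)$. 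Hence $c$ is constant on $\mathcal{AR}_k^{\hat m}(A,b)$, where by construction $c=c_{\hat m}$; since $b\in\mathcal{AR}_n^{\hat m}(A)$, this contradicts the choice of $c_{\hat m}$. No ``sufficiently deep'' choice of $b$ and no modification of the coloring are needed. Two further small points: the finiteness of $\mathcal{AR}_k^m(A)$ coming from A4(ii), which you call the key to the compactness, is never actually used in this argument, which is a direct gluing-and-contradiction argument rather than a K\"onig-type compactness argument; and the preliminary reduction to $s=2$ is likewise unnecessary (and not obviously legitimate as a purely finite reduction, since merging colors and iterating the finite statement would require recomposing the depth parameters): Theorem \ref{AbsRam} applies to $s$ colors directly.
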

\begin{proof}
Fix $k, n, s\in\mathbb{N}$ and $A\in\mathcal{R}$ such that for all $m$ there exists $c_m$ witnessing that the thesis of the theorem fails for $m$. Let us define $c : \mathcal{AR}_k \rightarrow s$ as: $$c(a) = c_{d(a)}(a)$$where $d(a) = depth_A(a)$, for all $a\in\mathcal{AR}_k(A)$; and $c(a)=0$ if $a\not\in\mathcal{AR}_k(A)$. By Theorem \ref{AbsRam}, there exists $B\leq A$ such that $c$ is constant in $\mathcal{AR}_k(B)$. Now, choose any $b\in\mathcal{AR}_n(B)$ and let $\hat{m} = depth_A(b)$. Notice that the following holds:

\begin{enumerate}
\item $b\in\mathcal{AR}_n^{\hat{m}}(A)$, and
\item $\mathcal{AR}_k^{\hat{m}}(A,b)\subset\mathcal{AR}_k(B)$.
\end{enumerate}

 (To prove 2, notice that if $a\leq_{fin}b$ and $b\in\mathcal{AR}(B)$ then $depth_B(a)\geq 0$. Hence, $a\in\mathcal{AR}(B)$ by A5(i).) Therefore, $c$ is constant in $\mathcal{AR}_k^{\hat{m}}(A,b)$. But this contradicts the fact that $c\upharpoonright\mathcal{AR}_k^{\hat{m}}(A,b) = c_{\hat{m}}$. This completes the proof.
\end{proof}

 Finally, we present the following consequence of theorem \ref{abstractGalvin}:

\begin{coro}\label{abiertos}
(Carlson) The metric Borel subsets of ${\cal R}$ are Ramsey.
\end{coro}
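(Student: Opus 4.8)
The plan is to prove that the family $\mathfrak{R}$ of all Ramsey subsets of $\mathcal{R}$ is a $\sigma$-algebra containing every metrically open set; since the metric Borel sets form the smallest $\sigma$-algebra containing the open sets, this yields the corollary. Closure under complementation is immediate from the symmetry of the definition: for any neighborhood $[a,B]$ one has $[a,B]\subseteq\mathcal{X}^{c}$ iff $[a,B]\cap\mathcal{X}=\emptyset$, and $[a,B]\cap\mathcal{X}^{c}=\emptyset$ iff $[a,B]\subseteq\mathcal{X}$, so the defining condition for $\mathcal{X}$ and for $\mathcal{X}^{c}$ coincide; also $\mathcal{R}$ and $\emptyset$ are trivially Ramsey.

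To see that a metrically open $\mathcal{X}=\bigcup_{a\in\mathcal{F}}[a]$ is Ramsey, I would fix a nonempty neighborhood $[c,A]$ and note that $C\in\mathcal{X}$ iff $(\exists n)(r_{n}(C)\in\mathcal{F})$. If some initial segment of $c$ belongs to $\mathcal{F}$ then $[c,A]\subseteq\mathcal{X}$ and any $B\in[c,A]$ (nonempty by (A.5)) works. Otherwise I would apply the argument of Theorem \ref{abstractGalvin} localized to the neighborhood $[c,A]$: the notions of accepting and rejecting in its proof are already stated relative to a stem and its depth, so the combinatorial forcing runs verbatim with $c$ and $depth_{A}(c)$ in place of $\emptyset$ and $0$, Claim \ref{decidesClaim} producing a deciding $\hat{B}\in[depth_{A}(c),A]$. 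The resulting dichotomy gives $B\in[c,A]$ with either no approximation of any member of $[c,B]$ lying in $\mathcal{F}$, whence $[c,B]\cap\mathcal{X}=\emptyset$, or every member of $[c,B]$ having an approximation in $\mathcal{F}$, whence $[c,B]\subseteq\mathcal{X}$.

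The main obstacle is closure under countable unions. Let $\mathcal{X}=\bigcup_{k}\mathcal{X}_{k}$ with each $\mathcal{X}_{k}\in\mathfrak{R}$, and fix $[c,A]$. If there are $k$ and $B'\in[c,A]$ with $[c,B']\subseteq\mathcal{X}_{k}$, then $[c,B']\subseteq\mathcal{X}$ and we are done. Otherwise, applying the Ramsey property of each $\mathcal{X}_{k}$ to the neighborhoods $[c,B']$ with $B'\in[c,A]$, the containment alternative never occurs, so each $\mathcal{X}_{k}$ is \emph{null above the stem $c$ below $A$}: for every $B'\in[c,A]$ there is $B''\in[c,B']$ with $[c,B'']\cap\mathcal{X}_{k}=\emptyset$. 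It then remains to fuse these countably many conditions into a single $B\in[c,A]$ with $[c,B]\cap\mathcal{X}_{k}=\emptyset$ for all $k$, giving $[c,B]\cap\mathcal{X}=\emptyset$. This fusion is the delicate point: one builds a decreasing sequence $A=B^{(0)}\geq B^{(1)}\geq\cdots$, at stage $k$ using the above to make $[c,B^{(k)}]$ disjoint from $\mathcal{X}_{k}$ while forcing $B^{(k)}\in[m_{k},B^{(k-1)}]$ with $m_{k}\to\infty$, and then takes the diagonal limit $B\in\bigcap_{k}[m_{k},B^{(k)}]$, exactly as in Claim \ref{decidesClaim} and the construction of the $C_{n}$ in the proof of Theorem \ref{abstractGalvin}. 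Here the identification of $\mathcal{R}$ with a subspace of $\mathcal{AR}^{\mathbb{N}}$ afforded by (A.1)--(A.3) together with the metric closedness of $\mathcal{R}$ are precisely what guarantee that the limit exists and satisfies $B\leq B^{(k)}$ for every $k$. Coordinating the intermediate stems so that the limit decides $\mathcal{X}$ uniformly on $[c,B]$ is where the care is needed; once it is done, $\mathfrak{R}$ is a $\sigma$-algebra containing the open sets and hence contains all metric Borel sets.
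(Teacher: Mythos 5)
Your overall decomposition is the same as the paper's: the paper also reduces to the metric open case by invoking closure of the Ramsey field under complementation and countable unions, and then handles open sets via Theorem \ref{abstractGalvin}. Your complementation argument is correct, and your open-case argument is in fact more careful than the paper's: where the paper says ``without loss of generality $a=\emptyset$,'' you correctly observe that Theorem \ref{abstractGalvin} as stated only produces some $B\leq A$ rather than an element of $[c,A]$, and that one must re-run the accept/reject forcing relative to the stem $c$; that relativization does go through, since the combinatorial forcing and axioms A.5, A.6 are already stem-local.

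The genuine gap is in the countable-union step, and it is not merely the ``delicate point'' you flag: the statement you set out to prove there is false. You claim that if no $k$ and no $B'\in[c,A]$ satisfy $[c,B']\subseteq\mathcal{X}_k$, then one can fuse to a single $B\in[c,A]$ with $[c,B]\cap\mathcal{X}_k=\emptyset$ for all $k$. Counterexample in Ellentuck's space: take $c=\emptyset$, $A=\mathbb{N}$, and $\mathcal{X}_k=\{C\in\mathbb{N}^{[\infty]} : k\in C\}$. Each $\mathcal{X}_k$ is Ramsey (given $[a,A']\neq\emptyset$, if $k\in a$ then $[a,A']\subseteq\mathcal{X}_k$; if $k\notin a$, shrink $A'$ by deleting $k$ from its tail to get the disjointness alternative), and no $\mathcal{X}_k$ contains any $[\emptyset,B']$, since $B'\setminus\{k\}\in[\emptyset,B']\setminus\mathcal{X}_k$; yet $\bigcup_k\mathcal{X}_k=\mathbb{N}^{[\infty]}$, so no $B$ whatsoever makes $[\emptyset,B]$ disjoint from the union. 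What your dichotomy ignores is that the pieces $\mathcal{X}_k$ may contain neighborhoods $[b,B']$ whose stems $b$ properly extend $c$, and such neighborhoods can cover all of $[c,B]$ for every $B$. Relatedly, the fusion you describe cannot be executed: killing $\mathcal{X}_k$ above the fixed stem $c$, even upgraded via A.5(ii), produces a condition agreeing with $B^{(k-1)}$ only up to the level $depth_{B^{(k-1)}}(c)$, which does not grow, so one cannot simultaneously demand $B^{(k)}\in[m_k,B^{(k-1)}]$ with $m_k\to\infty$; it is not ``exactly as in Claim \ref{decidesClaim},'' because there the stems treated at stage $n$ have depth $n$, which increases along the fusion. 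The correct argument, going back to Galvin and Prikry, fuses over \emph{all} stems extending $c$: build $B$ so that for each $n$, each of the finitely many (by A.4(ii)) relevant stems $b$ extending $c$ of depth about $n$, and each $k\leq n$, either $[b,B]\subseteq\mathcal{X}_k$ or $[b,B]\cap\mathcal{X}_k=\emptyset$; then apply the (localized) open case once more, with stem $c$, to the set $\mathcal{O}=\{C\in[c,B] : \exists n\,\exists k\ ([r_n(C),B]\subseteq\mathcal{X}_k)\}$, noting that $[c,B']\subseteq\mathcal{O}$ forces $[c,B']\subseteq\mathcal{X}$ while $[c,B']\cap\mathcal{O}=\emptyset$ forces $[c,B']\cap\mathcal{X}=\emptyset$. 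Alternatively you may simply cite this closure property as known, which is what the paper does; but as written your proposal does not establish it.
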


\begin{proof}
We only need to prove the result for metric open sets, because the Ramsey property is preserved by countable unions and complementation. Let $\mathcal{X}$ be a metric open subset of $\mathcal{R}$ and fix a nonempty $[a,A]$. Without a loss of generality we can assume $a = \emptyset$. Since $\mathcal{X}$ is open, there exists $\mathcal{F}\subseteq {\cal AR}$ such that ${\cal X} = \bigcup_{b\in {\cal F}} [b]$. Let $B\leq A$ be as in Theorem \ref{abstractGalvin}. If Part 1 of the theorem holds then $[0,B]\subseteq\mathcal{X}^c$ and if Part 2 holds then $[0,B]\subseteq\mathcal{X}$.
\end{proof}

\section{Some instances}

\subsection{Classical versions}

 If $(\mathcal{R}, \leq, r)$ is Ellentuck's space, that is, $\mathcal{R} := \mathbb{N}^{[\infty]}$,  $\leq\ :=\ \subseteq$ and $r(n,A) :=$ \textit{the first} $n$ \textit{elements of} $A$, then classical Galvin's lemma, Ramsey's theorem and the Galvin-Prikry theorem \cite{galpri} are easily obtained from Theorems \ref{abstractGalvin} and  \ref{AbsRam}, and  Corollary \ref{abiertos}, respectively. For every $X\subseteq\mathbb{N}$, let $X^{[k]} = \{Y\subseteq X = |Y| = k\}$. Then, in this case $\mathcal{AR}_k = \mathcal{AR}_k(\mathbb{N}) = \mathbb{N}^{[k]} = \{X\subseteq\mathbb{N} = |X| = k\}$ and $\mathcal{AR} = \mathcal{AR}(\mathbb{N}) = \mathbb{N}^{[<\infty]}$. Finite Ramsey's theorem is also obtained from Theorem \ref{AbsFiniteRamsey} but the proof needs some more work:

\begin{coro}[Finite Ramsey's theorem]
 Let $k, n, s\in\mathbb{N}$ be given. Then, there exists $M\in\mathbb{N}$ such that for every partition $c : M^{[k]}\rightarrow s$, there exists $H\in M^{[n]}$ such that $c$ is constant on $H^{[k]}$.
\end{coro}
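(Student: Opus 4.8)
The plan is to derive the finite Ramsey theorem as a special case of Theorem \ref{AbsFiniteRamsey} by specializing to Ellentuck's space $(\mathbb{N}^{[\infty]}, \subseteq, r)$, where $r(n,A)$ gives the first $n$ elements of $A$. First I would apply Theorem \ref{AbsFiniteRamsey} with the same $k, n, s$ and with $A := \mathbb{N}$. This yields an integer $m\in\mathbb{N}$ such that for every coloring $c:\mathcal{AR}_k^m(\mathbb{N})\to s$ there is $b\in\mathcal{AR}_n^m(\mathbb{N})$ on which $c$ is constant over $\mathcal{AR}_k^m(\mathbb{N},b)$. The work lies in translating the abstract depth-graded sets $\mathcal{AR}_k^m(\mathbb{N})$ and $\mathcal{AR}_k^m(\mathbb{N},b)$ into the concrete combinatorial objects $M^{[k]}$ and $H^{[k]}$ that appear in the statement.

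The key translation step is to unwind the finitization order and the depth function in this space. In Ellentuck's space, for $a\in\mathbb{N}^{[<\infty]}$ one has $a\leq_{fin}b$ precisely when $a\subseteq b$, so that $\mathcal{AR}_k^m(\mathbb{N},b) = \{a\in b^{[k]} : depth_{\mathbb{N}}(a)=m\}$. Here $depth_{\mathbb{N}}(a) = min\{ j : a\subseteq r_j(\mathbb{N})\}$, and since $r_j(\mathbb{N}) = \{0,1,\dots,j-1\} = j$ under the identification of an integer with the set of its predecessors, the condition $a\subseteq r_j(\mathbb{N})$ just says $\max(a) < j$. Hence $depth_{\mathbb{N}}(a) = \max(a)+1$, so the depth of a finite set records (up to the $+1$ shift) its largest element. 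Next I would observe that prescribing $depth_{\mathbb{N}}=m$ for a $k$-element set $a$ is equivalent to requiring $\max(a)=m-1$, i.e. $a\subseteq m$ with $m-1\in a$; and for an $n$-element $b$ with $depth_{\mathbb{N}}(b)=m$ one has $b\subseteq m$ with $\max(b)=m-1$.

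Given these identifications, I would set $M := m$ and recover the statement as follows. Let $c:M^{[k]}\to s$ be an arbitrary partition. I extend (or restrict) $c$ so as to realize it as a coloring of $\mathcal{AR}_k^m(\mathbb{N}) = \{a\in M^{[k]} : \max(a)=M-1\}$, noting that every $k$-subset of $M$ with largest element $M-1$ is genuinely in this depth-$m$ set; colors of the remaining $k$-subsets of $M$ can be assigned arbitrarily since they do not interfere with the conclusion. Applying Theorem \ref{AbsFiniteRamsey} produces $b\in\mathcal{AR}_n^m(\mathbb{N})$, i.e. an $n$-element subset $b\subseteq M$ with $\max(b)=M-1$, on which $c$ is constant over $\mathcal{AR}_k^m(\mathbb{N},b)$. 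Taking $H := b\in M^{[n]}$, I would finally need to check that $c$ is constant on all of $H^{[k]}$ and not merely on the depth-restricted subset $\mathcal{AR}_k^m(\mathbb{N},H) = \{a\in H^{[k]} : \max(a)=M-1\}$.

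The main obstacle is precisely this last gap: $\mathcal{AR}_k^m(\mathbb{N},H)$ consists only of those $k$-subsets of $H$ whose maximum equals $M-1$, whereas the finite Ramsey conclusion demands constancy on \emph{all} of $H^{[k]}$. This is the ``some more work'' the authors allude to. I expect the resolution to be an iterative or inductive argument: one applies the theorem not once but repeatedly, peeling off the top element $M-1$ and forcing homogeneity layer by layer on successively smaller end-segments, or alternatively one runs the argument for all the relevant depth values $m, m', \dots$ simultaneously and passes to a sufficiently large $M$ so that a single homogeneous set of size $n$ absorbs every needed depth. Carefully bookkeeping the resulting bounds — ensuring finitely many applications suffice and that the final $H$ has at least $n$ elements with $c$ constant across every $k$-subset regardless of its maximum — is where the genuine combinatorial effort goes; the depth-to-maximum dictionary established above is the routine part.
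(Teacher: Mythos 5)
Your setup and the depth-to-maximum dictionary are exactly right, and you have correctly located the obstacle: Theorem \ref{AbsFiniteRamsey} applied with parameters $k,n$ only yields constancy of $c$ on $\{a\in H^{[k]} : \max(a)=M-1\}$, not on all of $H^{[k]}$. But identifying the gap is where your proof stops; the speculation that follows (iterating the theorem, ``peeling off'' top elements, or running all depths simultaneously) is not carried out, and it is not how the gap gets closed. Note in particular that each application of Theorem \ref{AbsFiniteRamsey} produces its own value of $m$ and only controls $k$-sets of that exact depth, so your iterative scheme would have to make homogeneous sets produced at different depths cohere inside one final $H$; nothing in the abstract theorem provides that, and you do not explain how to obtain it.

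The paper closes the gap with a single, non-iterative trick that is missing from your proposal: a dimension shift. One applies Theorem \ref{AbsFiniteRamsey} not to $k,n$ but to $k+1,\,n+1$ (and $A=\mathbb{N}$), obtaining $m>1$; then, given an arbitrary $c:(m-1)^{[k]}\to s$, one defines $\hat{c}$ on $\{x\in m^{[k+1]} : m-1\in x\}$ by $\hat{c}(x)=c(x\setminus\{m-1\})$. Because the color of $x$ ignores the forced maximum element $m-1$, the depth restriction becomes harmless: the theorem yields $\hat{H}\in m^{[n+1]}$ with $\max(\hat{H})=m-1$ such that $\hat{c}$ is constant on $\{x\in\hat{H}^{[k+1]} : \max(x)=m-1\}$, and setting $H=\hat{H}\setminus\{m-1\}$, every $a\in H^{[k]}$ satisfies $c(a)=\hat{c}(a\cup\{m-1\})$, so $c$ is constant on \emph{all} of $H^{[k]}$ and $M=m-1$ works. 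In short, instead of trying to remove the depth restriction after the fact, one chooses the auxiliary coloring so that the restriction never bites; this is precisely the ``some more work'' the paper alludes to, and it is the idea your proposal lacks.
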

\begin{proof}
 Given $k,n, s\in\mathbb{N}$, let us apply Theorem \ref{AbsFiniteRamsey} to $k +1,n+1,r$ and $A = \mathbb{N}$, for $(\mathcal{R}, \leq, r)$ equal to Ellentuck's space.  First, notice that in this case the following holds for any $i,j\in\mathbb{N}$:

\begin{enumerate}
\item $\mathcal{AR}_i = \mathcal{AR}_i(\mathbb{N}) = \mathbb{N}^{[i]}$
\item $\mathcal{AR}_i^j(\mathbb{N}) = \{x\in j^{[i]} : j-1\in x\}$
\item $\mathcal{AR}_i^j(\mathbb{N},b) = \{x\subseteq b :  max (x) = max(b) = j-1\}$, for any $b\in\mathcal{AR}_i^j(\mathbb{N})$.
\end{enumerate}

 Let $m>1$ be as in Theorem \ref{AbsFiniteRamsey} applied to $k +1,n+1,r$ and $A = \mathbb{N}$. Now, consider a coloring $$c : (m-1)^{[k]}\rightarrow s$$and define $$\hat{c} : \{x\in m^{[k+1]} : m-1\in x\}\rightarrow s$$ as $$\hat{c}(x) = c(x\setminus\{m-1\}).$$ By the choice of $m$, there exists $\hat{H}\in m^{[n+1]}$ with $max(\hat{H})=m-1$ such that $\hat{c}$ is constant in $\{x\in\hat{H}^{[k+1]} : max(x) = m-1\}$. Let $H = \hat{H}\setminus\{m-1\}$. Notice that $H\in(m-1)^{[n]}$ and $c$ is constant in $H^{[k]}$. So $M = m-1$ is as required. This completes the proof.
\end{proof}

\subsection{Vector versions}

\subparagraph*{Matrices.}
 Let $F$ be a finite field. An $\mathbb{N}\times\mathbb{N}$--matrix over $F$ is a mapping $A:\mathbb{N}\times\mathbb{N}\rightarrow F$. Let $\mathcal{M}_{\infty}(F)$ denote the collection of all row-reduced echelon $\mathbb{N}\times\mathbb{N}$--matrices over $F$. For $A,B\in\mathcal{M}_{\infty}(F)$ write $A\leq B$ if and only if each row of $A$ is in the closed linear subspace of $F^{\mathbb{N}}$ generated by the rows of $B$.

\medskip

 For $A\in\mathcal{M}_{\infty}(F)$ and $n\in\mathbb{N}$, let $p_n(A) : = min\{j : A_n(j) \neq 0\}$. We define now the approximation function $r$ on $\mathbb{N}\times\mathcal{M}_{\infty}(F)$ as: $$r(0,A) = r_0(A): = \emptyset$$and $$r(n,A) = r_n(A) := A\upharpoonright((n+1)\times p_n(A)).$$ for $n>0$. In \cite{todo}, it is shown that $(\mathcal{M}_{\infty}(F), \leq, r)$ satisfies (A1)-(A6). So we are going to apply the results of Section \ref{AbstactVersions} to obtain the corresponding versions of Ramsey's theorem and Galvin's lemma within this context.

\medskip

 For $n,m\in\mathbb{N}$, let $\mathcal{M}_{n\times m}(F)$ denote the collection of all row-reduced echelon $n\times m$--matrices over $F$, and let $\mathcal{M}_{<\infty}(F) = \bigcup_{n,m\in\mathbb{N}}\mathcal{M}_{n\times m}(F)$, the collection of all row-reduced echelon matrices over $F$ with a finite number of rows and columns. In this context, $$\mathcal{AR}_n = \bigcup_{m\in\mathbb{N}}\mathcal{M}_{n\times m}(F),$$ for every $n\in\mathbb{N}$; and $$\mathcal{AR} = \mathcal{M}_{<\infty}(F).$$Now, for $A\in\mathcal{M}_{\infty}(F)$ and $a\in\mathcal{M}_{<\infty}(F)$, write $a\sqsubset A$ if there exists $n$ such that $a = r_n(A)$; also let $\mathcal{M}_{<\infty}^A(F)$ denote the set $\{a\in\mathcal{M}_{<\infty}(F) : \exists B\leq A\ (a\sqsubset B)\}$. Analogously define $\mathcal{M}_{n\times m}^A(F)$, for every $n,m\in\mathbb{N}$. So, in this case we have $$\mathcal{AR}(A) = \mathcal{M}_{<\infty}^A(F)$$and $$\mathcal{AR}_n(A) = \bigcup_{m\in\mathbb{N}}\mathcal{M}_{n\times m}^A(F),$$for every $n\in\mathbb{N}$. With this notation, in virtue of the results of Section \ref{AbstactVersions}, we can state versions of Galvin's lemma and Ramsey's theorem for matrices:

\begin{coro}\label{matrixGalvin}
(Galvin's lemma for matrices) For every $\mathcal{F}\subseteq\mathcal{M}_{<\infty}(F)$ and $A\in\mathcal{M}_{\infty}(F)$, there exists $B\leq A$ such that one of the following holds:
\begin{enumerate}
\item $\mathcal{M}_{<\infty}^B(F)\cap\mathcal{F} = \emptyset$, or
\item For every $C\leq B$ there exists $a\in\mathcal{F}$ such that $a\sqsubset C$.
\end{enumerate}
\end{coro}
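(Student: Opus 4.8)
The plan is to derive Corollary~\ref{matrixGalvin} as a direct instance of Theorem~\ref{abstractGalvin}, since $(\mathcal{M}_{\infty}(F), \leq, r)$ has already been shown to satisfy (A.1)--(A.6) and to be metrically closed. The only real work is to translate the abstract conclusions into the matrix-theoretic language established in the preceding notation.

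First I would invoke Theorem~\ref{abstractGalvin} applied to the space $(\mathcal{M}_{\infty}(F), \leq, r)$, the given $\mathcal{F}\subseteq\mathcal{M}_{<\infty}(F)=\mathcal{AR}$, and the given $A\in\mathcal{M}_{\infty}(F)=\mathcal{R}$. This produces some $B\leq A$ for which one of the two abstract alternatives holds. It then remains to check that each abstract alternative coincides with the corresponding matrix alternative under the dictionary $\mathcal{AR}(B)=\mathcal{M}_{<\infty}^B(F)$. For the first alternative, the abstract statement $\mathcal{AR}(B)\cap\mathcal{F}=\emptyset$ is literally $\mathcal{M}_{<\infty}^B(F)\cap\mathcal{F}=\emptyset$, so nothing needs to be done. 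For the second alternative, the abstract statement reads $(\forall C\leq B)(\exists n)(r_n(C)\in\mathcal{F})$; here I would use the definition $a\sqsubset C \iff (\exists n)(a=r_n(C))$ to rewrite $r_n(C)\in\mathcal{F}$ as the existence of some $a=r_n(C)\in\mathcal{F}$ with $a\sqsubset C$, which is exactly the matrix statement $(\forall C\leq B)(\exists a\in\mathcal{F})(a\sqsubset C)$.

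The argument is therefore a matter of unwinding definitions rather than new combinatorics, and I do not expect a genuine obstacle. The one point requiring a line of care is confirming that the identifications $\mathcal{AR}(B)=\mathcal{M}_{<\infty}^B(F)$ and the characterization of $r_n(C)\in\mathcal{F}$ via $\sqsubset$ are exactly the notational conventions fixed earlier in this subsection, so that both alternatives match without any hidden discrepancy (for instance, that the quantifier ranges $C\leq B$ and $n\in\mathbb{N}$ transfer verbatim). Once this is observed, the conclusion of Theorem~\ref{abstractGalvin} is precisely the disjunction asserted in the corollary, and the proof is complete.
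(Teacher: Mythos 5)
Your proposal is correct and is exactly the paper's approach: the paper states this corollary with no written proof (just \qed), treating it as an immediate instance of Theorem~\ref{abstractGalvin} for $(\mathcal{M}_{\infty}(F),\leq,r)$, using the identifications $\mathcal{AR}=\mathcal{M}_{<\infty}(F)$, $\mathcal{AR}(B)=\mathcal{M}_{<\infty}^B(F)$, and $a\sqsubset C\iff(\exists n)(a=r_n(C))$ that it fixes just before the statement. Your unwinding of the two alternatives matches this dictionary verbatim, so there is nothing to add.
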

\qed
\medskip

\begin{coro}\label{matrixRamsey}
(Ramsey's theorem for matrices) Let $n,s\in\mathbb{N}$ and $A\in\mathcal{M}_{\infty}(F)$ be given. For every finite coloring $c: \bigcup_{m\in\mathbb{N}}\mathcal{M}_{n\times m}(F) \rightarrow s$, there exists $B\leq A$ such that $\bigcup_{m\in\mathbb{N}}\mathcal{M}_{n\times m}^B(F)$ is monochromatic.
\end{coro}
\qed

 Now, given $k,n,m\in\mathbb{N}$ and $a\in\mathcal{M}_{n\times m}(F)$, let $\mathcal{M}_{k\times m}^a(F)$ denote the collection of all $k\times m$-matrices $b$ such that every row of $b$ is in the linear span generated by the rows of $a$ in $F^m$. From Theorem \ref{AbsFiniteRamsey} we obtain the following version of finite Ramsey's theorem for matrices:

\begin{coro}\label{FiniteMatrixRamsey}
(Finite Ramsey's theorem for matrices) Given $k,n,s\in\mathbb{N}$ there exists $m$ such that for every coloring $c : \mathcal{M}_{k\times m}(F) \rightarrow s$ there exists $a\in\mathcal{M}_{n\times m}(F)$ such that $\mathcal{M}_{k\times m}^a(F)$  is monochromatic.
\end{coro}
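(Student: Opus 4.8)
The plan is to derive this finite statement from the abstract finite Ramsey theorem (Theorem \ref{AbsFiniteRamsey}) applied to the matrix space $(\mathcal{M}_{\infty}(F), \leq, r)$ with $A$ equal to the infinite identity matrix $I$ (the $\leq$-maximum of $\mathcal{M}_{\infty}(F)$), exactly as Finite Ramsey's theorem was deduced for Ellentuck's space. First I would translate the abstract data into matrix language by computing, for the identity matrix, the two families $\mathcal{AR}_k^m(I)$ and $\mathcal{AR}_k^m(I,b)$. The key observation is that $depth_I(a)$ is governed by the column-width of the approximation $a$: since $r_d(I)$ spans the first $d$ coordinates of $F^{\mathbb{N}}$, the relation $a\leq_{fin} r_d(I)$ forces the support of the rows of $a$ into those coordinates, so that $a$ attains depth exactly $m$ precisely when it \emph{reaches} the $m$-th column, i.e.\ when (after the correct index normalization) $a$ is a row-reduced echelon matrix whose last pivot lies in the final column. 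This is the matrix analogue of the condition $\max(x)=j-1$ appearing in the classical computation of $\mathcal{AR}_i^j(\mathbb{N})$, and it likewise yields $\mathcal{AR}_k^m(I,b)=\mathcal{M}_{k\times m}^{a}(F)$ once $b$ is identified with the $n\times m$ matrix $a$ it encodes.

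Because the abstract theorem only controls approximations of a \emph{fixed} depth, hence matrices whose last column is forced to be a pivot, the passage to the full family $\mathcal{M}_{k\times m}(F)$ requires the same shift-by-one used in the classical proof. Concretely, I would apply Theorem \ref{AbsFiniteRamsey} to the parameters $k+1$, $n+1$, $s$ and $A=I$, obtaining a depth $m_0$, and set $m:=m_0-1$. Given a coloring $c:\mathcal{M}_{k\times m}(F)\to s$ I would define an auxiliary coloring $\hat c$ on $\mathcal{AR}_{k+1}^{m_0}(I)$ by augmenting each $k\times m$ matrix with a forced final pivot, namely by appending a new last column of zeros to its rows together with a new bottom row equal to the unit vector $e_{m}$, and setting $\hat c$ of the augmented matrix equal to $c$ of the original; this is the vector-space version of the map $\hat c(x)=c(x\setminus\{m-1\})$. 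Applying the abstract theorem yields some $b\in\mathcal{AR}_{n+1}^{m_0}(I)$ on which $\hat c$ is constant over $\mathcal{AR}_{k+1}^{m_0}(I,b)$; stripping off the forced pivot turns $b$ into the required $a\in\mathcal{M}_{n\times m}(F)$ and $\mathcal{AR}_{k+1}^{m_0}(I,b)$ back into $\mathcal{M}_{k\times m}^{a}(F)$, whence $c$ is constant on $\mathcal{M}_{k\times m}^{a}(F)$.

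The main obstacle I anticipate is purely bookkeeping but genuinely delicate: verifying the dictionary between fixed-depth approximations of $I$ and honest finite matrices. This means unwinding the definitions of $r_n$, of $\leq_{fin}$ and of $depth_I$ for the identity matrix carefully enough to confirm (i) that depth equals column-width up to the index shift, (ii) that depth-$m_0$ approximations are exactly the matrices with a pivot in the last column (so that the bottom row of such a matrix is forced to equal $e_{m}$, making the augmentation a genuine bijection), and (iii) that this augmentation carries $\mathcal{M}_{k\times m}^{a}(F)$ onto $\mathcal{AR}_{k+1}^{m_0}(I,b)$, using that the row span of the augmented $b$ is the row span of $a$ together with the line $F e_{m}$. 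Once this dictionary is in place, the monochromaticity transfers immediately and the proof closes exactly as in the classical case.
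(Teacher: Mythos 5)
Your starting point (apply Theorem \ref{AbsFiniteRamsey} to the matrix space with $A$ the infinite identity matrix $I$) is exactly the paper's route, but the shift-by-one machinery you graft onto it rests on a false dictionary, and that is a genuine gap. In Ellentuck's space a finite set carries no record of the ambient interval, so its depth must be read off from its maximum; this is what forces the condition $\max(x)=j-1$ and the shift. In the matrix space, by contrast, an approximation \emph{does} carry its column-width as part of its data: $r_n(A)=A\upharpoonright((n+1)\times p_n(A))$ is a function whose domain records the truncation point $p_n(A)$. Since the rows of $r_d(I)$ span all of $F^d$, one has $a\leq_{fin}r_d(I)$ as soon as $d$ is at least the width of $a$, so $depth_I(a)$ equals the width of $a$, with no condition whatsoever on where its last pivot lies. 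Your claim (ii) fails concretely: the $1\times 2$ approximation $a=(1\ 0)$ (an approximation, since $a=r_1(A)$ for $A$ with rows $e_0,e_2,e_3,\dots$) has no pivot in its last column, yet its depth in $I$ is $2$, not $1$; indeed for $B\in[1,I]$ with rows $e_0,\ e_1+e_2,\ e_3,\ e_4,\dots$ no vector in the row span of $B$ has pivot exactly at column $2$, so no $C\leq B$ can have $p_1(C)=2$, i.e.\ $[a,B]=\emptyset$, and (A.5)(i) would be violated if $depth_I(a)$ were $1$. Hence $\mathcal{AR}_k^m(I)$ is \emph{all} of $\mathcal{M}_{k\times m}(F)$, not the subfamily of matrices reaching the last column.

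This error breaks your argument at the decisive step. The abstract theorem applied to $k+1,n+1,s$ returns an arbitrary $b\in\mathcal{AR}_{n+1}^{m_0}(I)=\mathcal{M}_{(n+1)\times m_0}(F)$, which need not have last row $e_m$ and zero last column, so ``stripping off the forced pivot'' is undefined; worse, if $e_m$ is not in the row span of $b$, then $\mathcal{AR}_{k+1}^{m_0}(I,b)$ contains no matrix of your augmented form at all, and the monochromaticity obtained says nothing about the original coloring $c$ --- it may be witnessed entirely by the artificial values needed to extend $\hat c$, which as you define it lives only on a proper subset of $\mathcal{AR}_{k+1}^{m_0}(I)$. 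The repair is to delete the shift altogether: apply Theorem \ref{AbsFiniteRamsey} with the parameters $k,n,s$ themselves and $A=I$; then $\mathcal{AR}_k^m(I)=\mathcal{M}_{k\times m}(F)$, $\mathcal{AR}_n^m(I)=\mathcal{M}_{n\times m}(F)$, and $\mathcal{AR}_k^m(I,b)=\mathcal{M}_{k\times m}^{b}(F)$, so the corollary is a verbatim instance of the abstract theorem. This is precisely why the paper states it with no proof, whereas for Ellentuck's space it warns that the finite Ramsey theorem ``needs some more work'' and performs there the shift you tried to imitate: the extra work is an artifact of subsets of $\mathbb{N}$ not remembering their width, and it is unnecessary (and unavailable in the form you propose) for matrices.
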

\qed

 Next, the instance of Corollary \ref{abiertos} in this context:

\begin{coro}\label{MatrixOpen}
 Every  metric Borel subset of $\mathcal{M}_{\infty}(F)$ is Ramsey.
\end{coro}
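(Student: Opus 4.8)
The plan is to recognize Corollary \ref{MatrixOpen} as nothing more than the instance of the abstract Corollary \ref{abiertos} for the concrete triplet $(\mathcal{M}_{\infty}(F),\leq,r)$ introduced in this subsection. Since Corollary \ref{abiertos} was derived from Theorem \ref{abstractGalvin} for an arbitrary $(\mathcal{R},\leq,r)$ that is metrically closed and satisfies (A.1)--(A.6), the only work is to check that the matrix triplet meets exactly these hypotheses, and that the phrase ``metric Borel subset of $\mathcal{M}_{\infty}(F)$'' refers to the abstract metric topology to which Corollary \ref{abiertos} applies.

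First I would record that (A.1)--(A.6) hold for $(\mathcal{M}_{\infty}(F),\leq,r)$; this is precisely the content cited from \cite{todo} above. Next I would verify metric closedness: identifying each $A\in\mathcal{M}_{\infty}(F)$ with the sequence $(r_n(A))_n$ of its approximations embeds $\mathcal{M}_{\infty}(F)$ into $\mathcal{M}_{<\infty}(F)^{\mathbb{N}}$, and because the row-reduced echelon shape of a matrix is determined entry by entry, any coherent limit of such sequences is again a row-reduced echelon $\mathbb{N}\times\mathbb{N}$-matrix. Hence $\mathcal{M}_{\infty}(F)$ is a closed subspace, and the basic open sets $[a]$ with $a\in\mathcal{M}_{<\infty}(F)$ generate here the same topology as in the abstract setting, so the metric Borel $\sigma$-algebra coincides with the one used in Corollary \ref{abiertos}.

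With these two points in place, Corollary \ref{MatrixOpen} is immediate: it is Corollary \ref{abiertos} read in the space $(\mathcal{M}_{\infty}(F),\leq,r)$. If one prefers a self-contained argument, I would instead reproduce the proof of Corollary \ref{abiertos} verbatim, replacing the abstract Galvin lemma by its already-stated matrix form, Corollary \ref{matrixGalvin}: reduce to the case of a metric open set $\mathcal{X}=\bigcup_{b\in\mathcal{F}}[b]$, fix a nonempty $[a,A]$ (and without loss of generality $a=\emptyset$), choose $B\leq A$ as in Corollary \ref{matrixGalvin}, and conclude $[0,B]\subseteq\mathcal{X}^c$ in alternative 1 and $[0,B]\subseteq\mathcal{X}$ in alternative 2.

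I do not expect a genuine obstacle here, since the heavy lifting has already been done abstractly. The one step deserving a word of care is the reduction from Borel to open sets, which relies on the Ramsey property being preserved under complementation and countable unions; this is exactly the closure property packaged into Corollary \ref{abiertos}, so invoking that corollary directly is the cleanest route and avoids re-proving the countable-union stability by hand.
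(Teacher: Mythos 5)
Your proposal is correct and matches the paper exactly: the paper offers no separate argument for this corollary, treating it precisely as the instance of Corollary \ref{abiertos} for the triplet $(\mathcal{M}_{\infty}(F),\leq,r)$, which is metrically closed and satisfies (A.1)--(A.6) by the results cited from \cite{todo}. Your additional verification of metric closedness and the optional self-contained variant via Corollary \ref{matrixGalvin} are sound elaborations of the same route, not a different one.
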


\subparagraph*{Vector spaces.}
 Now, we will obtain vector versions of Ramsey's theorem and Galvin's lemma from Corollary \ref{matrixGalvin}. Also, Graham-Leeb-Rothschild theorem \cite{graleeroth} is obtained from Corollary \ref{FiniteMatrixRamsey}; and an infinitary version of it due to Carlson (\cite{carlson}), which is a vector version of Galvin-Prikry's theorem \cite{galpri}, is also obtained from Corollary \ref{MatrixOpen}. Some definitions are needed:

\medskip

 Given a finite field $F$, let

\begin{enumerate}
\item[{}] $\mathcal{V}_{\infty}(F) : =$ the set of infinite-dimensional closed subspaces of $F^{\mathbb{N}}$.
\item[{}] $\mathcal{V}_n^m(F) : =$ the set of $n$-dimensional subspaces of $F^m$, for every $n,m\in\mathbb{N}$ with $n\leq m$.
\item[{}] $\mathcal{V}_n^{<\infty}(F) : = \bigcup_m \mathcal{V}_n^m(F)$, for every $n\in\mathbb{N}$.
\item[{}] $\mathcal{V}^{<\infty}(F) : = \bigcup_n \mathcal{V}_n^{<\infty}(F)$.
\end{enumerate}

\begin{defn}
Given $V\in\mathcal{V}_{\infty}(F)$ and  $W\in\mathcal{V}^{<\infty}(F)$, we say that  $W$ is an \textbf{initial segment} of $V$, and write $W\sqsubset V$, if there exist $a\in\mathcal{M}_{<\infty}(F)$ and $B\in\mathcal{M}_{\infty}(F)$ such that the rows of $a$ form a basis for $W$, the closed linear span of the rows of $B$ is $V$ and $a$ is an approximation (in the sense of $(\mathcal{M}_{\infty}(F), \leq, r)$) of $B$.
\end{defn}

\medskip

 Fix $V\in\mathcal{V}_{\infty}(F)$. Let $$\mathcal{V}_{\infty}(F,V) : = \{V'\in\mathcal{V}_{\infty}(F) : V'\ \mbox{is a subspace of}\ V\}.$$ and for $n,m\in\mathbb{N}$ with $n\leq m$, let $$\mathcal{V}_n^m(F,V) : = \{W\in\mathcal{V}_n^m(F) : \exists V'\in\mathcal{V}_{\infty}(F,V)\ \ (W\sqsubset V')\}.$$ Also, let $$\mathcal{V}_n^{<\infty}(F,V) : = \bigcup_{m\geq n} \mathcal{V}_n^m(F,V)$$ and $$\mathcal{V}^{<\infty}(F,V) : = \bigcup_n \mathcal{V}_n^{<\infty}(F,V).$$

 From the results above we obtain the following:

\begin{coro}[Vector Galvin's lemma]\label{VectorGalvin}
For every $\mathcal{F}\subseteq\mathcal{V}^{<\infty}(F)$ there exists $V\in\mathcal{V}_{\infty}(F)$ such that one of the following holds:

\begin{enumerate}
\item  $\mathcal{V}^{<\infty}(F,V)\cap\mathcal{F} = \emptyset$, or
\item For every infinite-dimensional subspace $V'$ of $V$ there exists $W\in\mathcal{F}$ such that $W\sqsubset V'$.
\end{enumerate}
\end{coro}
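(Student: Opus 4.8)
The plan is to transfer the Galvin dichotomy for matrices (Corollary \ref{matrixGalvin}) across the correspondence between row-reduced echelon matrices and finite-dimensional subspaces, and between infinite row-reduced echelon matrices and infinite-dimensional closed subspaces. First I would set up the translation: given $\mathcal{F}\subseteq\mathcal{V}^{<\infty}(F)$, define a matrix family $\widehat{\mathcal{F}}\subseteq\mathcal{M}_{<\infty}(F)$ by putting $a\in\widehat{\mathcal{F}}$ exactly when the row space of $a$ (viewed in the appropriate $F^m$) belongs to $\mathcal{F}$. Since every finite-dimensional subspace $W\in\mathcal{V}^{<\infty}(F)$ has a \emph{unique} row-reduced echelon matrix whose rows form a basis of $W$, this assignment $a\mapsto\mathrm{rowsp}(a)$ is a well-defined surjection $\mathcal{M}_{<\infty}(F)\to\mathcal{V}^{<\infty}(F)$, and $\widehat{\mathcal{F}}$ is simply its preimage of $\mathcal{F}$.

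Next I would apply Corollary \ref{matrixGalvin} to $\widehat{\mathcal{F}}$ and an arbitrary $A\in\mathcal{M}_{\infty}(F)$, obtaining $B\leq A$ satisfying alternative (1) or (2) of that corollary. I then let $V$ be the closed linear span of the rows of $B$, so $V\in\mathcal{V}_{\infty}(F)$ by the matrix-space setup. The goal is to check that the two matrix alternatives translate into the two subspace alternatives. For alternative (1): if $\mathcal{M}_{<\infty}^B(F)\cap\widehat{\mathcal{F}}=\emptyset$, I must show $\mathcal{V}^{<\infty}(F,V)\cap\mathcal{F}=\emptyset$. The key point here is that a subspace $W\in\mathcal{V}^{<\infty}(F,V)$ has its unique echelon representative $a$ lying in $\mathcal{M}_{<\infty}^B(F)$, using the definition of $W\sqsubset V'$ (which supplies an echelon matrix $a$ approximating some $B'$ whose row-closure is $V'\leq V$), so $a\notin\widehat{\mathcal{F}}$ gives $W=\mathrm{rowsp}(a)\notin\mathcal{F}$. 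For alternative (2): given an infinite-dimensional subspace $V'\leq V$, I take a matrix $C\leq B$ whose row-closure is $V'$, find by the matrix lemma an $a\in\widehat{\mathcal{F}}$ with $a\sqsubset C$, and set $W=\mathrm{rowsp}(a)$; then $W\in\mathcal{F}$ and $a\sqsubset C$ witnesses $W\sqsubset V'$.

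The main obstacle will be verifying that the passage between matrices and subspaces is faithful at the level of the \emph{initial-segment} relation $\sqsubset$ and the covering relation $\leq$, i.e., that the bookkeeping in the previous paragraph is airtight. Concretely, I must confirm that every infinite-dimensional closed subspace $V'\leq V$ is realized as the row-closure of some echelon matrix $C\leq B$ (so that matrix data are available), and conversely that $a\sqsubset C$ with row-closures $W,V'$ yields precisely $W\sqsubset V'$ as defined for subspaces; this is exactly the content built into the definition of $\sqsubset$ for subspaces, so the work is to unwind that definition rather than to prove anything deep. Care is also needed because the echelon representative of $W$ lives in some $F^m$ while $W\sqsubset V'$ speaks of an abstract subspace, so I would fix the ambient dimension consistently via the depth/length data of the approximation $a$. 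Once these dictionary lemmas are in place, the dichotomy for subspaces follows immediately from the matrix dichotomy, with no further combinatorics required.
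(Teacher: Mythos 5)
Your proposal is correct and takes essentially the same route as the paper's own proof: both define $\hat{\mathcal{F}}$ as the preimage of $\mathcal{F}$ under the row-space map, apply Corollary \ref{matrixGalvin} to it, take $V$ to be the closed linear span of the rows of the resulting $B$, and translate each matrix alternative back through the definition of $\sqsubset$ for subspaces. The dictionary-checking you identify as the main obstacle is precisely what the paper's short argument unwinds, with no further combinatorial content needed.
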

\begin{proof}
Let $\hat{\mathcal{F}} = \{a\in\mathcal{M}_{<\infty}(F): \exists W\in\mathcal{F}\ (\mbox{the rows of}\ a\ \mbox{form a basis for}\ W)\}$, and fix  $B\in\mathcal{M}_{\infty}(F)$ satisfying the conclusion of Corollary \ref{matrixGalvin} for $\hat{\mathcal{F}}$. Let $V$ be the closed linear span generated by the rows of $B$. If $W\in\mathcal{V}^{<\infty}(F,V)$ and $a$ is such that its rows form a basis for $W$ then $a\in\mathcal{M}_{<\infty}^B(F)$. So, if Part 1 of Corollary \ref{matrixGalvin} is true then $\mathcal{V}^{<\infty}(F,V)\cap\mathcal{F} = \emptyset$. On the other hand, if Part 2 of Corollary \ref{matrixGalvin} holds and $V'\in\mathcal{V}_{\infty}(F,V)$, let $B'\leq B$ be such that the closed linear span of the rows of $B'$ is $V'$. Then, there exists $a\in\hat{\mathcal{F}}$ such that $a\sqsubset B'$. Let $W$ be the linear space generated by the rows of $a$. Then $W\in\mathcal{F}$ and $W\sqsubset V'$.
\end{proof}

 The following is a direct consequence of Corollary \ref{VectorGalvin}:

\begin{coro}[Vector Ramsey's theorem]\label{VectorRamsey}
For every $n,s\in\mathbb{N}$ and every coloring $c : \mathcal{V}_n^{<\infty}(F) \rightarrow s$ there exists $V\in\mathcal{V}_{\infty}(F) $ such that $c$ is constant in $\mathcal{V}_n^{<\infty}(F,V)$.
\end{coro}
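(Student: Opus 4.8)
The plan is to mirror the derivation of Theorem \ref{AbsRam} from Theorem \ref{abstractGalvin}: reduce to two colours and feed a single colour class into the Vector Galvin lemma. First I would reduce to the case $s=2$. This is a routine induction on $s$: merging the colours $\{1,\dots,s-1\}$ into one produces a $2$-colouring, to which the $s=2$ case yields a subspace $V_1$ on which $c$ is either constantly $0$ (and we are done) or takes values in $\{1,\dots,s-1\}$, whereupon one recolours and recurses \emph{below} $V_1$, noting that $\mathcal{V}_n^{<\infty}(F,V)\subseteq\mathcal{V}_n^{<\infty}(F,V_1)$ whenever $V\subseteq V_1$. The relativization the recursion needs is available because Corollary \ref{VectorGalvin}, being obtained from the parametrized Corollary \ref{matrixGalvin}, can be run below any fixed $V_0\in\mathcal{V}_{\infty}(F)$ (apply Corollary \ref{matrixGalvin} to a matrix whose closed row-span is $V_0$); hence so can the $s=2$ conclusion that is being iterated.

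For the heart of the matter, assume $s=2$ and set $\mathcal{F}=c^{-1}(\{0\})\subseteq\mathcal{V}_n^{<\infty}(F)\subseteq\mathcal{V}^{<\infty}(F)$. Applying Corollary \ref{VectorGalvin} to $\mathcal{F}$ produces $V\in\mathcal{V}_{\infty}(F)$ falling under one of its two alternatives. If alternative~1 holds, then $\mathcal{V}^{<\infty}(F,V)\cap\mathcal{F}=\emptyset$; since $\mathcal{V}_n^{<\infty}(F,V)\subseteq\mathcal{V}^{<\infty}(F,V)$ and $\mathcal{F}$ is exactly the colour-$0$ class, every $W\in\mathcal{V}_n^{<\infty}(F,V)$ satisfies $c(W)=1$, so $c$ is constant there.

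The subtle case is alternative~2, which only asserts that \emph{some} $W\in\mathcal{F}$ is an initial segment of each infinite-dimensional $V'\subseteq V$, whereas we want \emph{every} member of $\mathcal{V}_n^{<\infty}(F,V)$ to lie in $\mathcal{F}$. The observation that closes this gap is that each $V'\in\mathcal{V}_{\infty}(F)$ has a \emph{unique} $n$-dimensional initial segment. Indeed, if $W\sqsubset V'$ with $\dim W=n$, then $W$ is spanned by the rows of some $a=r_k(B')$, where $B'$ is the (unique, by A1--A3 and the uniqueness of row-reduced echelon form) matrix with closed row-span $V'$; being a top block of a row-reduced echelon matrix, $a$ has linearly independent rows, so its number of rows equals $\dim W=n$, which pins down $k$, and hence $a$ and $W$ are determined by $V'$. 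Granting this, take any $W\in\mathcal{V}_n^{<\infty}(F,V)$ and choose an infinite-dimensional $V'\subseteq V$ with $W\sqsubset V'$; alternative~2 furnishes $W'\in\mathcal{F}$ with $W'\sqsubset V'$, and since $\mathcal{F}\subseteq\mathcal{V}_n^{<\infty}(F)$ forces $\dim W'=n$, uniqueness gives $W'=W$, so $W\in\mathcal{F}$ and $c(W)=0$. Thus $c$ is constant on $\mathcal{V}_n^{<\infty}(F,V)$ in this case as well. I expect this uniqueness of the $n$-dimensional initial segment --- inherited from the uniqueness of approximations in $(\mathcal{M}_{\infty}(F),\leq,r)$ --- to be the main point requiring care; the reduction to $s=2$ and the treatment of alternative~1 are essentially bookkeeping.
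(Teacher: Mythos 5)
Your proof is correct and takes essentially the same route the paper intends: the paper presents Corollary \ref{VectorRamsey} as a ``direct consequence'' of Corollary \ref{VectorGalvin} with no written details, and your argument (reduce to two colours, apply the vector Galvin lemma to $\mathcal{F}=c^{-1}(\{0\})$, and use uniqueness of the $n$-dimensional initial segment of each $V'$ to upgrade alternative~2 to constancy) is precisely the fleshed-out version of that derivation, mirroring how the paper deduces Theorem \ref{AbsRam} from Theorem \ref{abstractGalvin}. The two points you flag as requiring care --- the uniqueness of the $n$-dimensional initial segment (inherited from the uniqueness of the row-reduced echelon representation and axioms A1--A3) and the fact that the colour-merging induction needs the parametrized, below-$V_0$ form available through Corollary \ref{matrixGalvin} --- are exactly the details the paper glosses over, and you handle both correctly.
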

\qed

 Now, the Graham-Leeb-Rothschild theorem is obtained directly from Corollary \ref{VectorRamsey} (or from Corollary \ref{FiniteMatrixRamsey}):

\begin{coro}[Graham-Leeb-Rothschild theorem \cite{graleeroth}]\label{G-L-R}
For every $k,n,s\in\mathbb{N}$, there exists $m\in\mathbb{N}$ large enough so that for every partiton of the $k$-dimensional subspaces of $F^m$ into $s$ classes there exists an $n$-dimensional subspace $V$ of $F^m$ such that the collection of $k$-dimensional subspaces of $V$ lies in one only class.
\end{coro}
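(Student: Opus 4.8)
The plan is to read the statement off directly from Corollary \ref{FiniteMatrixRamsey}, which is already the finite matrix counterpart of the assertion; the only real work is to translate between $k$-dimensional subspaces of $F^m$ and their canonical matrix representatives. The dictionary is the standard linear-algebra fact that every $j$-dimensional subspace $W$ of $F^m$ has a unique basis whose matrix is in row-reduced echelon form of full row rank $j$. This provides a bijection between $\mathcal{V}_j^m(F)$ and $\mathcal{M}_{j\times m}(F)$, sending $W$ to its reduced echelon basis matrix and, conversely, an echelon matrix to the span of its rows. I would fix this bijection for $j=k$ and for $j=n$ at the outset and use it throughout.

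Next I would record the geometric meaning of the sets appearing in Corollary \ref{FiniteMatrixRamsey}. If $a\in\mathcal{M}_{n\times m}(F)$ and $V$ denotes the $n$-dimensional subspace spanned by the rows of $a$, then a $k\times m$ row-reduced echelon matrix $b$ lies in $\mathcal{M}_{k\times m}^a(F)$ exactly when every row of $b$, and hence the entire row space of $b$, is contained in $V$. Since such a $b$ has full rank, its row space is a $k$-dimensional subspace of $V$. Thus the restriction of the bijection above identifies the row-reduced echelon members of $\mathcal{M}_{k\times m}^a(F)$ with precisely the $k$-dimensional subspaces of $V$, with none missed and none counted twice.

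With this in hand the derivation is short. Given $k,n,s$, let $m$ be as provided by Corollary \ref{FiniteMatrixRamsey}. Given a partition of the $k$-dimensional subspaces of $F^m$ into $s$ classes, regard it as a coloring $\gamma:\mathcal{V}_k^m(F)\to s$ and transport it to a coloring $c:\mathcal{M}_{k\times m}(F)\to s$ by setting $c(b)=\gamma(W)$, where $W$ is the row space of $b$. Corollary \ref{FiniteMatrixRamsey} then yields $a\in\mathcal{M}_{n\times m}(F)$ on which $\mathcal{M}_{k\times m}^a(F)$ is $c$-monochromatic. Taking $V$ to be the $n$-dimensional row space of $a$ and invoking the identification of the previous paragraph, every $k$-dimensional subspace of $V$ receives the same $\gamma$-color, which is exactly the desired conclusion.

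The point I would treat most carefully, and the main obstacle, is the mismatch between the literal definition of $\mathcal{M}_{k\times m}^a(F)$ — all $k\times m$ matrices whose rows lie in the span of the rows of $a$, including rank-deficient and non-echelon ones — and the domain $\mathcal{M}_{k\times m}(F)$ of the coloring $c$, which consists only of full-rank row-reduced echelon matrices. I would resolve this by observing that $c$, and hence the monochromaticity claim, is only ever evaluated on the row-reduced echelon members of $\mathcal{M}_{k\times m}^a(F)$, and that these are in bijection with the $k$-dimensional subspaces of $V$ by uniqueness of echelon form. It is also worth noting why Corollary \ref{FiniteMatrixRamsey} is the right tool rather than Corollary \ref{VectorRamsey}: the set $\mathcal{M}_{k\times m}^a(F)$ captures \emph{every} $k$-dimensional subspace of $V$ through its echelon representative, whereas the subspace families $\mathcal{V}_k^{<\infty}(F,V)$ are cut down by the initial-segment relation $\sqsubset$ and need not contain all $k$-dimensional subspaces of a given finite subspace. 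Beyond this, everything reduces to the existence and uniqueness of the row-reduced echelon form, which I would invoke rather than reprove.
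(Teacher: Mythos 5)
Your proof is correct and follows the route the paper itself indicates: the paper gives no argument for this corollary at all, simply asserting that it is obtained directly from Corollary \ref{VectorRamsey} (or from Corollary \ref{FiniteMatrixRamsey}), and your argument is a careful fleshing-out of the latter route via the bijection between $k$-dimensional subspaces of $F^m$ and full-rank reduced row echelon matrices. Your two side observations --- that the literal definition of $\mathcal{M}_{k\times m}^a(F)$ includes non-echelon matrices on which the transported coloring is undefined, and that the families $\mathcal{V}_k^{<\infty}(F,V)$ in Corollary \ref{VectorRamsey} are cut down by the initial-segment relation $\sqsubset$ so that route is less immediate than the paper's ``directly'' suggests --- are both sound points that the paper glosses over.
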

\qed

 We conclude this section with a proof of the infinitary version of the Graham-Leeb-Rothschild theorem due to Carlson. It is a vector version of the Galvin-Prikry theorem:

\begin{coro}[Carlson \cite{carlson}]\label{VectorGalvinPrikry}
 If $\mathcal{X}\subseteq\mathcal{V}_{\infty}(F)$ is Borel then there exists $V\in\mathcal{V}_{\infty}(F)$ such that either all closed infinite subspace of $V$ is in $\mathcal{X}$ or all closed infinite subspaces of $V$ are in the complement of $\mathcal{X}$.
\end{coro}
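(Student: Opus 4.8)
The plan is to transfer the statement to the matrix space $\mathcal{M}_{\infty}(F)$ through the natural bijection and then invoke Corollary \ref{MatrixOpen}. Define $\Phi:\mathcal{M}_{\infty}(F)\to\mathcal{V}_{\infty}(F)$ by letting $\Phi(A)$ be the closed linear span of the rows of $A$. Since every infinite-dimensional closed subspace of $F^{\mathbb{N}}$ has a unique row-reduced echelon matrix whose rows span it (this is the correspondence underlying the space $(\mathcal{M}_{\infty}(F),\leq,r)$ of \cite{todo}), $\Phi$ is a bijection, and by the definition of $\leq$ on $\mathcal{M}_{\infty}(F)$ it is an order isomorphism: $C\leq B$ iff $\Phi(C)\subseteq\Phi(B)$. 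Because $r_0\equiv\emptyset$ we have $[0,B]=\{C\in\mathcal{M}_{\infty}(F):C\leq B\}$, so $\Phi$ maps $[0,B]$ onto $\mathcal{V}_{\infty}(F,\Phi(B))$, the set of infinite-dimensional closed subspaces of $\Phi(B)$.

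Next I would transport the topology. Equip $\mathcal{V}_{\infty}(F)$ with the metric topology generated by the sets $\{V:W\sqsubset V\}$, $W\in\mathcal{V}^{<\infty}(F)$; this is the topology under which \emph{Borel} is to be understood here. Using that $a\sqsubset A$ holds exactly when the span of the rows of $a$ is an initial segment of $\Phi(A)$, one checks that $\Phi$ carries each basic open set $[a]$ of $\mathcal{M}_{\infty}(F)$ to a basic open set of $\mathcal{V}_{\infty}(F)$ and conversely, so that $\Phi$ is a homeomorphism. In particular, if $\mathcal{X}\subseteq\mathcal{V}_{\infty}(F)$ is Borel, then $\hat{\mathcal{X}}:=\Phi^{-1}(\mathcal{X})$ is a metric Borel subset of $\mathcal{M}_{\infty}(F)$.

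Finally I apply Corollary \ref{MatrixOpen} to $\hat{\mathcal{X}}$: being Ramsey, it admits, for the nonempty neighborhood $[\emptyset,A]$ (any fixed $A\in\mathcal{M}_{\infty}(F)$), some $B\leq A$ with $[0,B]\subseteq\hat{\mathcal{X}}$ or $[0,B]\cap\hat{\mathcal{X}}=\emptyset$. Setting $V=\Phi(B)$ and pushing forward through $\Phi$, the first alternative says that every infinite-dimensional closed subspace of $V$ lies in $\mathcal{X}$, and the second that every such subspace lies in the complement of $\mathcal{X}$, which is the desired dichotomy. The main obstacle is the middle step: verifying that $\Phi$ is a homeomorphism — in particular its bijectivity, which rests on the uniqueness of the row-reduced echelon representative of a closed subspace, together with the exact matching of the initial-segment relations $a\sqsubset A$ and $W\sqsubset V$ that guarantees Borel sets pull back to Borel sets. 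Everything else is a direct translation through $\Phi$ of a statement already established for matrices.
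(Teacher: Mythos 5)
Your proposal is correct and follows essentially the same route as the paper: the paper's proof also identifies $\mathcal{V}_{\infty}(F)$ with $\mathcal{M}_{\infty}(F)$ via the row-reduced echelon correspondence, observes that this identification is a homeomorphism, and then applies Corollary \ref{MatrixOpen}. You have merely spelled out the details (bijectivity, order isomorphism, matching of basic open sets, and the push-forward of the dichotomy) that the paper leaves implicit.
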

\begin{proof}
Every open subset of $\mathcal{V}_{\infty}(F)$ can be easily identified with an open subset of $\mathcal{M}_{\infty}(F)$, with the product topology inherited from $F^{\mathbb{N}\times\mathbb{N}}$, regarding $F$ as a discrete space. This correspondence is actually an homeomorphism. So, the result holds by Corollary \ref{MatrixOpen}.
\end{proof}

\subsection{Dual versions}

Let $(\omega)^{\omega}$ be the set of all the infinite partitions $X = (X_i)_{i\in\mathbb{N}}$ of $\mathbb{N}$ such that $$i<j \rightarrow min (X_i) < min (X_j).$$ Given $X,Y\in (\omega)^{\omega}$, we say that $X$ is \textit{coarser} than $Y$ if very block in $Y$ is a subset of some block in $X$. Pre-order $(\omega)^{\omega}$ as follows:

$$X \leq Y \longleftrightarrow \ X\ \mbox{is coarser than}\ Y$$

 For every $k,n\in\mathbb{N}$ let $(n)^k$ be the set of all the $k$-partitions of $n$, i.e., partitions of $n$ into $k$ pieces. Also, for every $k\in\mathbb{N}$, let $(<\omega)^k := \bigcup_{n\in\mathbb{N}}(n)^k =$ the set of all the $k$-partitions of some integer. Finally, set $(<\omega)^{<\omega} = \bigcup_{k\in\mathbb{N}}(<\omega)^k$.

\medskip

 Let us define $r : \mathbb{N}\times (\omega)^{\omega} \rightarrow (<\omega)^{<\omega} $ in the following way:

$$\forall n\ \forall X = (X_i)_{i\in\mathbb{N}},\ r(n,X) = r_n(X) = (X_i\cap min (X_n) \})_{i<n}\setminus\{\emptyset\}.$$

\medskip

 It is known that $((\omega)^{\omega},\leq, r)$ satisfies (A.1)-(A.6) and is a closed subset of the product space $((<\omega)^{<\omega})^{\mathbb{N}}$, regarding $(<\omega)^{<\omega}$ as a discrete space (see \cite{todo}). So, we can state the corresponding versions of Theorems \ref{abstractGalvin}, \ref{AbsRam} and \ref{AbsFiniteRamsey}. For $s\in (<\omega)^{<\omega}$ and $X\in(\omega)^{\omega}$, write $s\sqsubset X$ if $(\exists n)(s = r_n(X))$.

\begin{coro}\label{PartitionGalvin}(Dualization of Galvin's lemma.)
 Given ${\cal F}\subseteq (<\omega)^{<\omega}$ and $X\in(\omega)^{\omega}$ there exists $Y\in(\omega)^{\omega}$ such that one of the following holds:
\begin{enumerate}
\item $(<\omega,Y)^{<\omega}\cap{\cal F}=\emptyset$, or
\item $\forall Z\in (Y)^{\omega} (\exists s\in {\cal F})(s\sqsubset Z)$.
\end{enumerate}
\end{coro}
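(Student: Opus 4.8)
The plan is to read Corollary \ref{PartitionGalvin} off Theorem \ref{abstractGalvin} directly, treating $((\omega)^{\omega}, \leq, r)$ as the relevant instance of the abstract framework. As recalled just above the statement, this triplet is metrically closed and satisfies (A.1)--(A.6) (see \cite{todo}), so Theorem \ref{abstractGalvin} applies to it verbatim, with $\mathcal{R} = (\omega)^{\omega}$ and $\mathcal{AR} = (<\omega)^{<\omega}$. First I would apply that theorem to the given $\mathcal{F} \subseteq (<\omega)^{<\omega}$ and $X \in (\omega)^{\omega}$, obtaining $Y \leq X$ for which one of the two abstract alternatives holds. Since $Y \leq X$ and $X \in (\omega)^{\omega}$, this $Y$ is again an infinite partition, hence a legitimate witness for the corollary.

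The remaining work is to check that the two abstract alternatives are exactly clauses (1) and (2) once the notation is unwound. For clause (1), I would verify that $\mathcal{AR}(Y)$ is precisely the set denoted $(<\omega,Y)^{<\omega}$: by definition $\mathcal{AR}(Y) = \{s \in \mathcal{AR} : [s,Y] \neq \emptyset\}$, i.e., the finite partitions $s$ occurring as an approximation of some $Z \leq Y$, which is the intended meaning of $(<\omega,Y)^{<\omega}$; thus $\mathcal{AR}(Y) \cap \mathcal{F} = \emptyset$ is literally clause (1). For clause (2), I would note that the quantifier ``$\forall C \leq B$'' becomes ``$\forall Z \in (Y)^{\omega}$'', since $(Y)^{\omega}$ denotes exactly $\{Z \in (\omega)^{\omega} : Z \leq Y\}$ (compare $A^{[\infty]} = \{X : X \leq A\}$ in Ellentuck's space), and that the inner statement $\exists n\,(r_n(C) \in \mathcal{F})$ is, by the convention $s \sqsubset Z \Leftrightarrow \exists n\,(s = r_n(Z))$, the same as $\exists s \in \mathcal{F}\,(s \sqsubset Z)$. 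Hence the second abstract alternative is clause (2).

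The main obstacle here is not combinatorial at all; it is purely the bookkeeping needed to confirm the dictionary between the abstract objects $\mathcal{AR}(Y)$, $\{Z : Z \leq Y\}$, $\sqsubset$ and their partition-theoretic names. The one point deserving genuine care is that the approximation map $r_n(X) = (X_i \cap \min(X_n))_{i<n} \setminus \{\emptyset\}$ does yield $\mathcal{AR} = (<\omega)^{<\omega}$ and $\mathcal{AR}(Y) = (<\omega,Y)^{<\omega}$ under these exact index conventions, so that the ranges and depths match. Once that verification is in place, no argument beyond Theorem \ref{abstractGalvin} is required, and the corollary follows by taking the $Y$ furnished by the theorem as the asserted witness.
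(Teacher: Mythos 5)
Your proposal is correct and matches the paper's own treatment exactly: the paper disposes of this corollary with a bare \qed, precisely because (as stated just before it, citing Todorcevic) $((\omega)^{\omega},\leq,r)$ is metrically closed and satisfies (A.1)--(A.6), so Theorem \ref{abstractGalvin} applies and the two clauses are just the abstract alternatives with $\mathcal{AR}(Y) = (<\omega,Y)^{<\omega}$, $\{Z : Z\leq Y\} = (Y)^{\omega}$, and $\exists n\,(r_n(Z)\in\mathcal{F})$ rewritten via $\sqsubset$. Your dictionary-checking is exactly the bookkeeping the paper leaves implicit.
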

\qed

\begin{coro}[Dualization of Ramsey's theorem; Halbeisen \cite{Halb}]\label{DualRamsey}
For all $k, s\in\mathbb{N}$ and every coloring $c : (<\omega)^k \rightarrow s$ there exists $Y\in (\omega)^{\omega}$ such that $(<\omega, Y)^k$ is monochromatic.
\end{coro}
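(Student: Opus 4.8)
The plan is to obtain this statement as an immediate instance of the Abstract version of Ramsey's theorem (Theorem \ref{AbsRam}), in the same manner as its Ellentuck and matrix versions were derived above. The excerpt has already recorded that $((\omega)^{\omega}, \leq, r)$ is metrically closed and satisfies (A.1)--(A.6), so the hypotheses of Theorem \ref{AbsRam} are in force and none of the combinatorial axioms need to be re-examined.

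First I would fix the dictionary between the notation of the dual space and that of the abstract framework. From the definition of $r$ given above, each $r_n(X)$ is an $n$-partition of $\min(X_n)$, so the $k$th approximation set is $\mathcal{AR}_k = (<\omega)^k$; and for a fixed $Y\in(\omega)^{\omega}$ the relativized approximation set $\mathcal{AR}_k(Y) = \{a\in\mathcal{AR}_k : [a,Y]\neq\emptyset\}$ is exactly $(<\omega, Y)^k$. Under this identification, the assertion ``$(<\omega, Y)^k$ is monochromatic'' says precisely that $c$ is constant on $\mathcal{AR}_k(Y)$.

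With the dictionary in hand, I would fix any $X\in(\omega)^{\omega}$ (the set is nonempty, so for instance the partition of $\mathbb{N}$ into singletons serves) and apply Theorem \ref{AbsRam} to the coloring $c : \mathcal{AR}_k = (<\omega)^k \to s$ and to $X$. This produces $Y\leq X$ with $c$ constant on $\mathcal{AR}_k(Y) = (<\omega, Y)^k$, which is the desired conclusion.

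I expect no genuine obstacle: the whole combinatorial content is already packaged inside Theorem \ref{AbsRam}. The only points demanding care are purely bookkeeping ones, namely the identification $\mathcal{AR}_k(Y) = (<\omega, Y)^k$ and the observation that, since the statement asks only for the existence of some $Y$, the starting element $X$ may be chosen arbitrarily.
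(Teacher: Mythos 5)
Your proposal is correct and matches the paper's own treatment: the paper states this corollary with no further argument precisely because it is the instance of Theorem \ref{AbsRam} for the dual Ramsey space $((\omega)^{\omega},\leq,r)$, which was already noted to be metrically closed and to satisfy (A.1)--(A.6). Your bookkeeping (the identifications $\mathcal{AR}_k = (<\omega)^k$ and $\mathcal{AR}_k(Y) = (<\omega,Y)^k$, plus choosing an arbitrary starting partition such as the partition into singletons) is exactly the routine verification the paper leaves implicit.
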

\qed

 Interestingly, the proof given in \cite{Halb} of Corollary \ref{DualRamsey} uses the Dual Ramsey theorem of Carlson and Simpson \cite{carsimp2}. Notice that our proof of it is simpler. The dualization of the finite Ramsey theorem, (namely, Ramsey's theorem for $n$-parameter sets) can be easily obtained from Corollary \ref{DualRamsey} by a typical compactness argument.

\begin{coro}
(Ramsey's theorem for $n$-parameter sets; Graham-Rothschild \cite{graroth}) For all positive integers $r$ and $k\leq m$ there exists $n\in\mathbb{N}$ large enough for the following to hold. For every coloring $c : (n)^k \rightarrow s$ there exists $t\in (n)^m$ such that $c$ is constant in $(t)^k$.
\end{coro}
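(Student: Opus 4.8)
The plan is to derive this finite statement from the infinite Corollary~\ref{DualRamsey} by a compactness argument, exactly as finite Ramsey is derived from infinite Ramsey. I would argue by contradiction: fix $s$ and $k\leq m$, and suppose that for \emph{every} $n\in\mathbb{N}$ the conclusion fails, i.e.\ there is a \textbf{bad} coloring $c_n:(n)^k\rightarrow s$ admitting no $t\in(n)^m$ with $c_n$ constant on $(t)^k$. The goal is to assemble these finitary bad colorings into a single coloring $c:(<\omega)^k\rightarrow s$ which is still bad at every finite level (no $t\in(N)^m$ is $c$-monochromatic on $(t)^k$, for any $N$), and then contradict Corollary~\ref{DualRamsey}.

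The assembly is the compactness step. I would view the finite initial-segment partitions of $\{0,\dots,N-1\}$ as the nodes of a tree, ordered by restriction to shorter initial segments; this tree is finitely branching, since a partition of $\{0,\dots,N-1\}$ extends to $\{0,\dots,N\}$ only by placing $N$ into one of the existing blocks or into a new block. Equivalently, one may work directly in the compact space $s^{(<\omega)^k}$ and pass to a cluster point of suitable coherent restrictions of the $c_n$. Either way one extracts a limiting coloring $c:(<\omega)^k\rightarrow s$ whose restriction to each $(N)^k$ arises as a restriction of some bad $c_n$ with $n\geq N$, and is therefore itself bad. Now apply Corollary~\ref{DualRamsey} to $c$ to obtain $Y\in(\omega)^{\omega}$ with $(<\omega,Y)^k$ $c$-monochromatic. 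Setting $t=r_m(Y)$, an $m$-partition of the integer $N_0:=\min(Y_m)$, so $t\in(N_0)^m$, one checks that every $k$-coarsening of $t$ is an approximation $r_k(Z)$ of some $Z\leq Y$, whence $(t)^k\subseteq(<\omega,Y)^k$; thus $c$ is constant on $(t)^k$, contradicting that $c$ is bad at level $N_0$. A typical compactness argument then yields the desired $n$.

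The main obstacle is precisely the coherence of the restriction maps between the finite sets $(n)^k$, which is what makes ``the restriction of a bad coloring is bad'' true. Unlike the classical case, where $[n]^{[k]}\subseteq[n']^{[k]}$ gives a free inclusion of domains and monochromatic sets persist automatically, here the $k$-partitions of distinct integers do not nest, so one must define the restriction operation using the \emph{coarsening} relation together with the min-ordering convention built into $r$, and verify that an $m$-partition $t\in(N)^m$ lifts to $t'\in(n)^m$ so that non-monochromaticity transfers downward. Checking this compatibility, together with the inclusion $(t)^k\subseteq(<\omega,Y)^k$ used in the final contradiction, is the only delicate point; the remaining steps are the standard König/inverse-limit compactness and a direct appeal to Corollary~\ref{DualRamsey}.
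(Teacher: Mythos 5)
Your proposal is correct in its overall skeleton and that skeleton coincides with the paper's: argue by contradiction, combine the bad colorings $c_n$ into a single $c:(<\omega)^k\rightarrow s$, apply Corollary~\ref{DualRamsey} to get $Y$, pick $t\in(<\omega,Y)^m$, and use $(t)^k\subseteq(<\omega,Y)^k$ to contradict badness. Where you genuinely diverge is the assembly step, and here you have made the problem much harder than it is. The point you flag as the ``main obstacle'' --- that the sets $(n)^k$ do not nest --- is in fact what makes the dual setting \emph{easier} than the classical one: since a $k$-partition $t$ determines the unique integer $\#(t)$ it partitions, the sets $(n)^k$ are pairwise \emph{disjoint}, so the paper simply defines $c(t)=c_{\#(t)}(t)$ on their union $(<\omega)^k$; no cluster point, no K\"onig tree, no restriction maps, no coherence. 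The final contradiction is then immediate for the same reason: coarsening does not change the underlying integer, so every $u\in(t)^k$ satisfies $\#(u)=\#(t)=n$, hence $c$ agrees with $c_n$ on all of $(t)^k$, and a $c$-monochromatic $(t)^k$ is a $c_n$-monochromatic one --- exactly what badness of $c_n$ forbids. Your compactness route can be completed (lift a partition of $N$ to one of $n\geq N$ by placing the new points $N,\dots,n-1$ in the block containing $0$; then the $k$-coarsenings of the lift are precisely the lifts of the $k$-coarsenings, so non-monochromaticity transfers and a cluster point of the induced extensions works), but the lifting lemma and the limit argument you would have to write out are entirely avoidable, and your proposal leaves them as unverified ``one checks'' steps. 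The one verification both arguments genuinely share is $(t)^k\subseteq(<\omega,Y)^k$ for $t\in(<\omega,Y)^m$, which the paper asserts without proof and which does require unwinding the definition of $r$ and the coarsening order.
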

\begin{proof}
 Fix  positive integers $r$ and $k\leq m$. Suppose the conclusion fails, and for every $n\in\mathbb{N}$ choose $c_n$, an $r$-coloring of $(n)^k$ witnessing this fact. For every $t\in (<\omega)^k$, use the notation $\#(t)$ to denote the unique $n\in\mathbb{N}$ such that $t$ is a $k$-partition of $n$. Let us define  $c : (<\omega)^k \rightarrow s$ as follows: $$\forall t\in (<\omega)^k,\ c(t) = c_{\#(t)}(t)$$ By Corollary \ref{DualRamsey}, there exists $Y\in (\omega)^{\omega}$ such that $(<\omega, Y)^k$ is monochromatic for $c$. Choose any $t\in(<\omega, Y)^m$ and let $n = \#(t)$. Then $t\in (n)^m$ and $(t)^k\subset(<\omega, Y)^k$. So $c$ is constant in $(t)^k$, but $c = c_n$ in $(t)^k$. A contradiction.
\end{proof}

\begin{rmrk}
Ramsey's theorem \cite{ramsey} is also a consequence of corollary \ref{DualRamsey}: for every finite coloring  $c$ of $\mathbb{N}^{[k]}$, define a finite coloring $d$ of $(<\omega)^{k+1}$ in this way: $d(s) = c(\{min\ x: x\ \mbox{is a block of}\ s\}\setminus\{0\})$.
\end{rmrk}

We conclude this section with one more direct consequence of Corollary \ref{PartitionGalvin}:

\begin{coro}
[Dual Galvin-Prikry theorem; Carlson and Simpson \cite{carsimp2}] Given a partition $(\omega)^{\omega} = C_0\cup C_1\dots \cup C_{r-1}$ where each $C_i$ is Borel, there exists $X\in (\omega)^{\omega}$ such that $(X)^{\omega}\subseteq C_i$ for some $i$.
\end{coro}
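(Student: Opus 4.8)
The plan is to derive the statement from the fact that, in the partition space $((\omega)^{\omega},\leq,r)$, every Borel set is Ramsey. Since this space satisfies (A.1)--(A.6) and is a closed subset of $((<\omega)^{<\omega})^{\mathbb{N}}$, as recalled just before Corollary \ref{PartitionGalvin}, Corollary \ref{abiertos} applies verbatim and yields that every metric Borel subset of $(\omega)^{\omega}$ is Ramsey; in particular each $C_i$ is Ramsey. First I would set up the dictionary between the two languages. Taking $a=\emptyset$, so that $a=r_0(B)$ holds for every $B$, the Ellentuck-type neighborhood $[\emptyset,X]$ is exactly $\{Z\in(\omega)^{\omega}:Z\leq X\}$, which is precisely the set $(X)^{\omega}$ appearing in part 2 of Corollary \ref{PartitionGalvin}. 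I would also observe that the partition $E$ of $\mathbb{N}$ into singletons is the $\leq$-largest element: every $Z$ is coarser than $E$ because each singleton block of $E$ is contained in a block of $Z$, so $Z\leq E$ for all $Z$ and hence $[\emptyset,E]=(\omega)^{\omega}$.

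With this translation in hand, I would run the Ramsey property of the $C_i$ through a finite iteration over the $r$ colors. Set $X_0=E$, so $[\emptyset,X_0]=(\omega)^{\omega}\neq\emptyset$. Given $X_i$ with $[\emptyset,X_i]\neq\emptyset$, apply the Ramsey property of $C_i$ at the neighborhood $[\emptyset,X_i]$ to obtain $X_{i+1}\leq X_i$ with either $[\emptyset,X_{i+1}]\subseteq C_i$ or $[\emptyset,X_{i+1}]\cap C_i=\emptyset$. If the first alternative ever occurs, I stop and output $X=X_{i+1}$, which satisfies $(X)^{\omega}=[\emptyset,X_{i+1}]\subseteq C_i$, as required. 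Each neighborhood $[\emptyset,X_{i+1}]$ is nonempty since it contains $X_{i+1}$, so the recursion is well defined at every stage.

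If the favorable alternative never occurs, then after processing all $r$ colors I obtain a chain $E=X_0\geq X_1\geq\cdots\geq X_r$ with $[\emptyset,X_{i+1}]\cap C_i=\emptyset$ for each $i<r$. Since $X_r\leq X_{i+1}$ gives $[\emptyset,X_r]\subseteq[\emptyset,X_{i+1}]$, the neighborhood $[\emptyset,X_r]$ is disjoint from every $C_i$; but $X_r\in[\emptyset,X_r]$ and $(\omega)^{\omega}=\bigcup_{i<r}C_i$, so $X_r$ belongs to some $C_i$, a contradiction. Hence the inclusion $[\emptyset,X_{i+1}]\subseteq C_i$ must hold at some stage, which proves the theorem. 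The only point deserving care --- and the step I would treat as the main obstacle --- is confirming that the Borel hierarchy to which Corollary \ref{abiertos} refers is exactly the one generated by the metric (product) topology that $(\omega)^{\omega}$ inherits from $((<\omega)^{<\omega})^{\mathbb{N}}$; once this and the closedness of the space are recorded, the whole argument reduces to the routine finite iteration above.
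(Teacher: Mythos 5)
Your proposal is correct and takes essentially the same route the paper intends: the paper states this corollary with no written proof, as a direct consequence of its machinery (Borel subsets of the partition space $(\omega)^{\omega}$ are Ramsey, via Corollary \ref{abiertos} applied to $((\omega)^{\omega},\leq,r)$, whose Galvin-type instance is Corollary \ref{PartitionGalvin}), and your dictionary $[\emptyset,X]=(X)^{\omega}$ together with the finite iteration over the $r$ Ramsey sets $C_i$ is exactly the standard fill-in of the omitted details. Nothing is missing.
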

\qed

\section{Final comments}

The importance of Theorems \ref{abstractGalvin}, \ref{AbsRam} and \ref{AbsFiniteRamsey} is partially in the variety of instances which follow as special cases. As we have seen, some of them are well known important results like Galvin's lemma, Ramsey's theorem or the Graham-Leeb-Rothschild theorem. Nevertheless, some of them have been little explored before, as far as we are concerned. For example, this is the case of Corollary \ref{PartitionGalvin}, the dualization of Galvin's lemma. And it is also the case of the version of Galvin's lemma obtained from Theorem \ref{abstractGalvin} when we consider the space $FIN_k^{[\infty]}$ of all the infinite block sequences of elements of $FIN_k$, the discretization  of the positive part of the unit sphere of the Banach space $c_0$ used by Gowers to study a sort of stability for Lipschitz functions (please see \cite{Gow} and \cite{todo} for the definitions). We know from \cite{todo} that $FIN_k^{[\infty]}$ is a topological Ramsey space. So in virtue of Theorem \ref{abstractGalvin} we can prove directly that every (metric) Borel subset of $FIN_k^{[\infty]}$ is Ramsey.

\medskip

Finally, we would like to conclude by mentioning the following. In the proof of Theorem \ref{abstractGalvin}, a technique of \textit{selection by diagonalization (or by fusion)} is used recurrently; see for example the proof of Claim \ref{decidesClaim}. We can now attempt to isolate from it a notion of  \textit{abstract selective coideal} analog to the concept of selective coideal on $\mathbb{N}$ (see \cite{mathias}) to generalize the results contained in \cite{mij}, where a notion \textit{selective ultrafilter} corresponding to topological Ramsey spaces is given. This in turn could lead us to an abstract approach to local Ramsey theory. This was in part the motivation for this paper.


\begin{thebibliography}{00}
\bibitem{carlson} Carlson T. J. : An infinitary version of the Graham-Leeb-Rothschild theorem. J. Combin. Theory Ser. A 44, 22--33 (1987).
\bibitem{carsimp}  Carlson T. J. , Simpson S. G.: Topological Ramsey theory. In: Ne\^{s}et\^{r}il, J., R\"{o}dl (eds.) Mathematics of Ramsey Theory, pp. 172--183, Springer, Berlin, 1990.
\bibitem{carsimp2} Carlson  T. J., Simpson S. G. : A dual form of Ramsey's theorem. Advan. in Math. 53, 265--290 (1984).
\bibitem{ellen} Ellentuck E.: A new proof that analitic sets are Ramsey. J. Symbol. Logic 39, 163--165 (1974).
\bibitem{galvin} Galvin F.: A generalizition of Ramsey's theorem. Notices of the Amer. Math. Soc. 15, 548 (1968).
\bibitem{galpri} Galvin  F., Prikry K. : Borel sets and Ramsey's theorem. J. Symbol. Logic 38, 193--198 (1973).
\bibitem{Gow} Gowers W. T. : Lipschitz functions on classical spaces. European J. Combin. 13, 141-151 (1992).
\bibitem{graroth} Graham R. L., Rothschild B. L. : Ramsey's theorem for $n$-parameter sets. Trans. Amer. Math. Soc. , 257--292 (1971).
\bibitem{graleeroth} Graham R. L., Leeb  K., Rothschild B. L. : Ramsey's theorem for a class of categories. Advan. in Math. 8, 417--433 (1972).
\bibitem{Halb} Halbeisen L. : Ramseyan Ultrafilters. Fundam. Mathematicae 169(3), 233--248 (2001).
\bibitem{mathias} Mathias, A. R. : Happy families. Ann. Math. Logic 12(1), 59--111 (1977).
\bibitem{mill} Milliken K. : Ramsey's theorem with sums or unions. J. Combin. Theory Ser. A 18, 276--290 (1975).
\bibitem{mij} Mijares J. G. : A notion of selective ultrafilter corresponding to topological Ramsey spaces. Math. Log. Quart. 53(3), 255--267 (2007).
\bibitem{ramsey} Ramsey F. P. : On a problem of formal logic. Proc. London Math. Soc. Ser. 2. 30, 264-286 (1929).
\bibitem{todo} Todorcevic S. : Introduction to Ramsey spaces. Princeton University Press, Princeton, New Jersey, 2010.



\end{thebibliography}
\end{document}